\theoremstyle{plain}
\newtheorem{theorem}{Theorem}[section]
\newtheorem{lemma}[theorem]{Lemma}
\newtheorem{corollary}[theorem]{Corollary}
\newtheorem{proposition}[theorem]{Proposition}
\theoremstyle{definition}
\newtheorem{definition}[theorem]{Definition}
\theoremstyle{definition}
\newtheorem{notation}[theorem]{Notation}
\newtheorem{remark}[theorem]{Remark}
\def\@enum@{\list{\csname label\@enumctr\endcsname}%
           {\usecounter{\@enumctr}\def\makelabel##1{\hss\llap{##1}}
           \itemsep=2pt\parsep=0pt\topsep=3pt plus 1pt minus 1 pt}}
\newenvironment{numlist}{\enumerate[(1)]}{\endenumerate}
\newenvironment{proclama}[1]{
                \par\vspace{\topsep}\noindent{\bf #1}
                \begin{em}}
                {\end{em}\par\vspace{\topsep}}
 \newenvironment{proclama not emphasized}[1]{\par\vspace{\topsep}\noindent{\bf #1}}{\par\vspace{\topsep}}
\newcommand{\Z}{\mathbb{Z}}
\newcommand{\R}{\mathbb{R}}
\renewcommand{\H}{{\mathbb{H}}}
\newcommand{\SI}{ {\mathbb{S} }^1}
\newcommand{\inu}{\iota}
\newcommand{\sm}{\setminus}
\newcommand{\cc}{\mathcal{C}}
\newcommand{\la}{\langle}
\newcommand{\ra}{\rangle}
\newcommand{\geo}{\Gamma}
\newcommand{\geop}{\Gamma'}
\newcommand{\isom}{\mathrm{Isom}(\H)}
\newcommand{\tg}{{\gamma}}
\newcommand{\conjj}[1]{\langle{#1} \rangle}
\newcommand\map[3]{\mbox{${#1}\colon {#2} \longrightarrow {#3}$}}
\newcommand{\xx}{x_1}
\newcommand{\yy}{y_1}
\newcommand{\tax}{\tau_x}
\newcommand{\tay}{\tau_y}
\newcommand{\taxx}[1]{\tau_{#1}}
\newcommand{\taxy}{\tau_{xy}}
\newcommand{\fhc}[1]{\widetilde{#1}}
\newcommand{\pri}{\Theta}
\newcommand{\pro}{\Pi}
\newcommand{\clo}{\delta}
\begin{document} 

\title[{The extended Goldman bracket determines intersection numbers for orbifolds}]{The Goldman bracket determines intersection numbers for  surfaces and orbifolds}

\author{Moira Chas}

\address{	Department of Mathematics,\\
		Stony Brook University\\
		Stony Brook, NY, 11794}

\email{moira@math.sunysb.edu}

\author{Siddhartha Gadgil}

\address{	Department of Mathematics,\\ 
		Indian Institute of Science,\\
		Bangalore 560012, India}

\email{gadgil@math.iisc.ernet.in}

\date{\today}

\subjclass[2010]{Primary 57M50}
   \keywords{orbifold, hyperbolic, surfaces, intersection number}

\thanks{Supported by NSF grant 1071448-1-46676}


\begin{abstract} 
In the mid eighties Goldman proved an embedded curve could be isotoped to not intersect a closed geodesic if and only if their Lie bracket (as defined in that work) vanished. Goldman asked for a topological proof and about extensions of the conclusion to curves with self-intersection. Turaev, in the late eighties, asked about characterizing simple closed curves algebraically, in terms of the same Lie structure. We show how the Goldman bracket answers these questions for all finite type surfaces. In fact we count self-intersection numbers and mutual intersection numbers for all finite type orientable orbifolds in terms of a new Lie bracket operation, extending Goldman's. The arguments are purely topological, or based on elementary ideas from hyperbolic geometry.

These results are intended to be used to recognize hyperbolic and Seifert vertices and the gluing graph in the geometrization of three manifolds. The recognition is based on the structure of the String Topology bracket of three manifolds. 
\end{abstract}

\maketitle
\begin{center}
\textit{This work is dedicated with deep and grateful admiration to Bill Thurston (1946-2012).}
\end{center}
\section{Introduction}

Goldman \cite{Gol} discovered in the eighties an intriguing  Lie algebra structure on the free abelian group generated by the set of free homotopy classes of closed directed curves on an oriented surface $F$. The definition of the Goldman bracket  combines intersection structure with the usual based loop product in the following way: 
Given two closed  free homotopy classes $a$ and $b$ with representatives  $A$ and $B$ respectively, intersecting only in transversal double points,  
\begin{equation}\label{bracket def}
[a,b]=\sum_{P\in A \cap B}\mathrm{sign}(P)  \fhc{ A\cdot_P B}
\end{equation}
where $\mathrm{sign}(p)$  is the sign of the intersection between the curves $A$  and $B$ at $P$ and $A\cdot_p B$ is the loop product of $A$ and $B$ both viewed as based at $P$,  and $\fhc{ C }$ is the free homotopy  class of a curve $C$. This bracket is extended by linearity to the free module generated by free homotopy classes of curves.
Goldman showed that this bracket is well defined, skew-symmetric and satisfies the Jacobi identity.

Clearly, if $a$ and $b$ are free homotopy classes that have disjoint representatives, then   $[a,b]$ is zero. Goldman \cite{Gol} also  showed (using Thurston's earthquakes) that this bracket has the remarkable property that if one of the classes, $a$ or $b$ has a simple representative, then the bracket $[a,b]$ vanishes if and only if $a$ and $b$ can be represented by disjoint curves. Goldman asked for a topological proof and about extensions of the conclusion to curves with self-intersection. Turaev, in the late eighties, asked about characterizing simple closed curves algebraically in terms of this Lie structure. 

Later on  Chas \cite{chas2} proved that if either $a$ or $b$ has a simple representative then the bracket of $a$ and $b$ counts the geometric  intersection number between $a$ and $b$ (by geometric intersection number we mean  the minimum number of points, counted with multiplicity, in which representatives of $a$ and $b$ intersect).

On the other hand, there are examples of classes $a$ and $b$ with no disjoint representatives and such that $[a,b]=0$ \cite[Example 9.1]{chas}.  The bracket is a homotopy invariant analogous to the set of conjugacy classes in the fundamental group and it is, in some sense, simpler than the fundamental group itself.
Since intersection and self-intersection numbers of closed curves on surfaces play  such a critical  role in several areas of low-dimensional topology, it is highly desirable to find such characterizations of the intersection number. A result of this nature, obtained by Chas and Krongold~\cite{CK}, was that for the subset of compact orientable surfaces with non-empty boundary, the  bracket $[a,a^3]$ determines the self-intersection number of $a$. 

Finally, after the twenty five years since Goldman's  paper \cite{Gol} we show  here how the  bracket answers the question about disjunction and simplicity of closed curves for all finite type  surfaces. We count self-intersection numbers and mutual intersection numbers for all finite type orientable orbifolds in terms of a new Lie bracket operation,  extending Goldman's. Our results fill in most of the lacunae in partial results that have resisted extension over the intervening years. The arguments are purely topological, using group theory ideas of Freedman, Scott and Haas  \cite{Scott} and \cite{FHS}, or they are based on  elementary  geometrical ideas from  hyperbolic geometry. 
 
By a \emph{Fuchsian group} we mean a  discrete  group of orientation preserving isometries of the hyperbolic plane. Below are the two main results of this paper. 

\begin{proclama}{Mutual Intersection Theorem} Let $x$ and $y$ by non-conjugate hyperbolic elements in a finitely generated Fuchsian group.
Then the geometric intersection number of $x$ and $y$ is given by  the number of terms (counted with multiplicity) divided by $p.q$, of  $[\conjj{x^p}, \conjj{y^q}]$  for   all but finitely many values of positive integers $p$ and $q$,  satisfying the ratio of the translation length of $x$ by the translation length of $y$ is  not $q/p$ \end{proclama}

\begin{proclama}{Self Intersection  Theorem } For $x$ a hyperbolic element in a finitely generated Fuchsian group, which is not a proper power of another element, the geometric self-intersection number of $x$ is given by the number of terms (counted with multiplicity) divided by $p.q$ of 
$[\la x^p \ra,\la x^q\ra]$ 
for all  but finitely many values of positive integers $p$ and $q$ satisfying $p \ne q$.
\end{proclama}

Our proof is based on the word hyperbolicity of Fuchsian groups  rather than small-cancellation theory as in~\cite{CK}. By extending the result of \cite{chas2} for surfaces with boundary to closed surfaces we complete  the answer to Goldman's question \cite[Subsection 5.17]{Gol}, whether his topological result (if $a$ and $b$ are two free homotopy classes of curves on a surface such that $a$ has a simple representative and $[a,b]=0$, then $a$ and $b$ have disjoint representatives) had a topological proof.

The main lemma of this work states that if at least one of $p$ and $q$ is sufficiently large and the lengths of $x^p$ and $y^q$ are different, then there is no cancellation of terms in the bracket $[\conjj{ x^p}, \conjj{y^q}]$. In other words, if the  representatives $A$ and $B$ intersect in the minimum number of points, then two intersection points $P$ and $Q$ with different sign do not give the same free  homotopy class of curves, that is $\fhc{A \cdot_P B}\ne \fhc{A\cdot_Q B}$. 

We show this by constructing quasi-geodesic representatives of a lift of a loop representing $A \cdot_P B$. These quasi-geodesics are the concatenations of certain segments of  translates of the axis of $x$ and the axis of $y$. 
As quasi-geodesics are not too far from  geodesics, it follows that if two points of intersection give the same free homotopy class, then there is a pair of corresponding quasi-geodesics that  are close, which then implies that they are equal. We deduce that the two points correspond to terms with the same sign in the Goldman bracket.

These  results are intended to be applied to recognize hyperbolic and Seifert vertices and the gluing graph in the geometrization of three manifolds. The recognition is  based on the structure of the String  Topology bracket of three manifolds.

For a typical irreducible three manifold, the cyclic homology of the group ring of the fundamental group  lives in two degrees: zero and one. Degree one is a Lie algebra and degree zero is a Lie module for degree one. The Lie algebra breaks into a direct sum corresponding to the pieces and the module structure tells how they are combined in the graph.

One can show that the Goldman bracket on the linear space with basis the set of free homotopy classes and the power operations on this basis determine the Fuchsian group of an orbifold. Thus, the Goldman bracket solves  "the recognition problem" for two dimensional orbifolds. More significantly, now that the proof of the Geometrization conjecture has enabled  a classification of three manifolds, there arises the need to  calculate the geometrization in examples like knots, i.e. "the recognition problem for three manifolds". Our work directly applies to that since the String Topology bracket in three manifolds will be used to describe the canonical graph of the geometrization picture as well as which vertices are hyperbolic and which are Seifert Fibered spaces. 
 This bracket is largely concentrated on the Seifert pieces.  On these pieces it depends on the  orbifold bracket defined here. The orbifold part of the story seemed sufficiently interesting to present independently with the details of the application to three manifolds coming next.

We emphasize though that the above characterization is a new one for closed curves on closed surfaces, and should be of interest even in this case.

Others have considered String Topology operations  for orbifolds and manifold stacks in a more abstract setting \cite{ABU},  \cite{BGN}, \cite{LUX}. It would be interesting to relate those results to the concrete results here.

This paper benefitted from conversations with Ian Agol, Danny Calegari and Dennis Sullivan. It started when the first author was visiting the Indian Institute of Science in Bangalore, India, to which she would like to express gratitude. Finally, in the final stage of this paper, the authors learned the sad news of Bill Thurston's death. This work wouldn't have been possible without the many directions he opened up in mathematics.

In Section \ref{in} we review the group theoretic definition of intersection number from \cite{FHS} and \cite{Scott} as well as the definition of the geometric intersection number of closed curves in a two dimensional, orientable orbifold.  Section \ref{orbi} is devoted to the extension of the Goldman bracket to oriented orbifolds (a crucial part of this definition is the elementary geometric fact that if two hyperbolic transformations $x$ and $y$ have intersecting axes, then $xy$ is hyperbolic).  In Section~\ref{Jacobi} we prove the Jacobi identity for the extension of the Goldman bracket (interestingly enough, this proof boils down to the proposition of geometry that if a line intersects a side of a triangle, then it intersects one of the other two sides).
In Section~\ref{examples} we give examples of the bracket in the modular surface (a beautiful and computable example of orbifolds)
In Section \ref{dog} we show that geodesics are quantitatively separated for hyperbolic surfaces (and orbifolds). Namely if two closed geodesics sufficiently close and parallel after lifting to the universal cover, they must coincide. 
In Section \ref{ncl} we prove the main non-cancellation lemma, stating that if the conjugacy classes of the two terms of the bracket coincide, then the  
two quasi-geodesic associated to these two terms coincide.  
Finally in Section  \ref{main} we give the proofs of  the Intersection Theorem and the Self -intersection Theorem.

\section{The geometric intersection number and the group theoretic intersection number}\label{in}

Let  $G$ be  a discrete subgroup of orientation preserving isometries of the hyperbolic plane $\H$.  (The set of  isometries  of $\H$, $\isom$  has the compact-open topology.)

Each isometry $g$ of the hyperbolic plane extends to the circle at infinity, where, if $g \ne 1$, it fixes at most $2$ points. An isometry is called \emph{elliptic, parabolic} or \emph{hyperbolic} according as it fixes $0$, $1$ or $2$ points respectively in the circle at infinity. 
A hyperbolic isometry $g$ fixes the (hyperbolic) line joining its two fixed points at infinity. This line is called the \emph{axis of $g$}.  Further, the sets of fixed points at infinity of two isometries contained in a discrete subgroup $G$ are either disjoint or coincide. If the sets of fixed points at infinity of a pair of elements of $G$ coincide and are non-empty, then the isometries are both powers of the same element of $G$.

In this paper, an \emph{orbifold $\H/G$} is the  quotient of the hyperbolic plane $\H$ by  a discrete group of orientation preserving isometries $G$, provided with the induced metric. The pertinent finer notion  of free homotopy for orbifolds is described in Subsection \ref{orbifold}. (Note that we are using the word "orbifold" in a narrower sense than the usual).

In this section we review the definition of closed curves, homotopy and geometric intersection number for curves   for an orbifold   (Subsection \ref{orbifold}), the group theoretic definition of intersection number in orbifolds  (Subsection \ref{labeling}) and show these two definitions agree. 
(The reader is referred to \cite[Chapter 13]{Thurston}, \cite[Chapter 2]{bmp} and \cite[Section 6.2]{K} for a more general definition of  orbifolds and orbifold homotopy . See also \cite[Section 13.3]{Ra} for  a formidable discussion of based orbifold homotopy in terms of charts.)

\subsection{Orbifold homotopy and the geometric intersection number}\label{orbifold}

A \emph{cone point} $P$ in $\H/G$ is the projection of a point in $\H$ which is fixed by some non-trivial element  of $G$. The \emph{order} of a cone point $P$ is the cardinality of the maximal cyclic subgroup of $G$ fixing $P$.

Consider the projection map,  $\map{\pro}{\H}{\H/G}$. A \emph{representative of a closed oriented curve in an orbifold $\H/G$} is a continuous map $\map{\alpha}{\SI}{\H/G}$ ($\H/G$ thought as a topological space), passing through at most finitely many cone points, together with the choice of a full lift $\map{\hat \alpha}{\R}{\H}$, (so that $\pro \circ \hat \alpha = \alpha \circ \pri$, where $\map{\pri}{\R}{\R/2\pi\Z}$ is the usual projection.) Two representatives of closed curves are equivalent if their full lifts are related by an element of the group $G$. A \emph{closed curve on the orbifold $\H/G$} is an equivalence class of representatives of closed curves.

\begin{definition}\label{homotopy} Two closed oriented curves $\alpha$ and $\alpha'$ in $\H/G$ are \emph{$\H/G$-homotopic} if they are related by a finite sequences of moves. Each of these moves is either a homotopy in the complement of  the cone points or is one of the skein relations or moves depicted in Figures~\ref{skein even} and \ref{skein odd}. There, the disk where the move happens contains exactly one cone point $P$, and $n$ denotes the order of $P$.  An arc with  no self-intersection in the disk and passing through $P$ is $\H/G$-homotopic relative to endpoints to an arc  spiraling around $P$ in either direction  $(n-1)/2$ times if $n$ is odd (Figure~\ref{skein odd}), $n/2$ times if $n$ is even (Figure~\ref{skein even}). Also, if $n$ is odd, the endpoints of the arc are antipodal and if $n$ is even, the endpoints coincide.

\begin{figure}[htbp]
\begin{pspicture}(16,6.5)
%
\rput(1,1.5){$\H/G$}
\rput(1,5){$\H$}
\rput(6,5){$\sim$}
\rput(10,5){$\sim$}
\rput(6,1.5){$\sim$}
\rput(10,1.5){$\sim$}
\pscircle(4,5){1.5}
\psdot(4,5)

\psecurve[arrowsize=0.2,showpoints=false]{->}(2,4)(2.5,5)(4,5.64)(5.5,5)(6,4)

\rput(4,5){
\PstPolygon[PolyNbSides =4,PolyIntermediatePoint =0.,PolyRotation=180, unit	= 1.5,linecolor=lightgray]

}

\pscircle(8,5){1.5}
\rput(4,0){
\psdot(4,5)}
\rput(4,0){
\psecurve[showpoints=false,arrowsize=0.2]{->}(2,4)(2.5,5)(4,5)(5.5,5)(6,6)
\psdot(4,5)}

\rput(8,5){
\PstPolygon[PolyNbSides =4,PolyIntermediatePoint =0.,PolyRotation=180, unit	= 1.5,linecolor=lightgray]
}

\pscircle(12,5){1.5}
\rput(8,0){
\psdot(4,5)
}
\rput(8,0){\psecurve[arrowsize=0.2,showpoints=false]{->}(2,6)(2.5,5)(4,4.4)(5.5,5)(6,6)}
\rput(12,5){
\PstPolygon[PolyNbSides=4,PolyIntermediatePoint =0.,PolyRotation=180, unit	= 1.5,linecolor=lightgray]
}

\rput(0,-3.5){

\pscircle(4,5){1.5}
\psdot(4,5)
\psecurve[arrowsize=0.2,showpoints=false]{->}(2,4)(2.5,5)(4,5.64)(5,5)(4,4.4)(3.4,5)(4,5.2)(4.4,5)(2.5,5)(2,6)

\pscircle(8,5){1.5}
\rput(4,0){\psecurve[arrowsize=0.2]{->}(2,4)(2.5,5)(4,5)(2.5,5)(2,6)
\psdot(4,5)}

\pscircle(12,5){1.5}
\rput(8,0){\psecurve[showpoints=false,arrowsize=0.2]{<-}(2,4)(2.5,5)(4,5.64)(5,5)(4,4.4)(3.4,5)(4,5.2)(4.4,5)(2.5,5)(2,6)
\psdot(4,5)
}

}

 \end{pspicture}
 \caption{Skein relations for points order $n=4$ (lower) and the corresponding lifts (upper).}\label{skein even}
\end{figure}
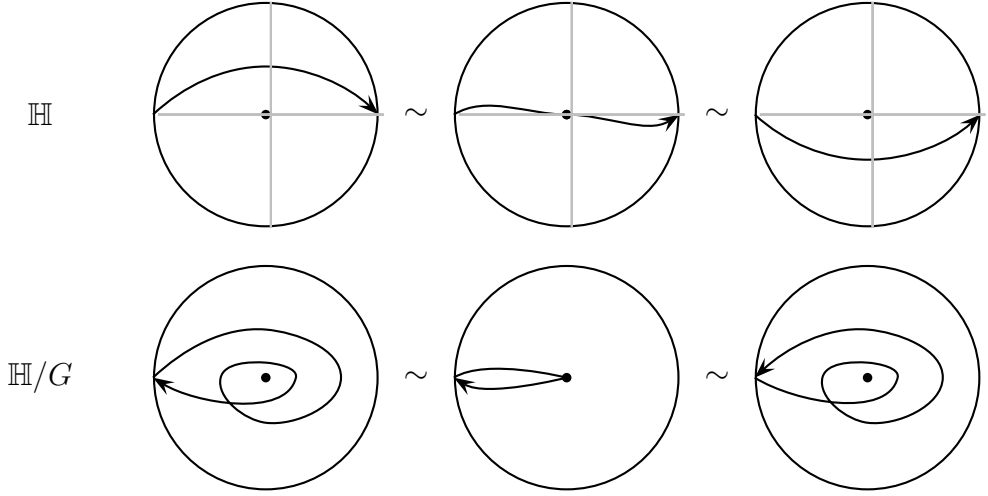

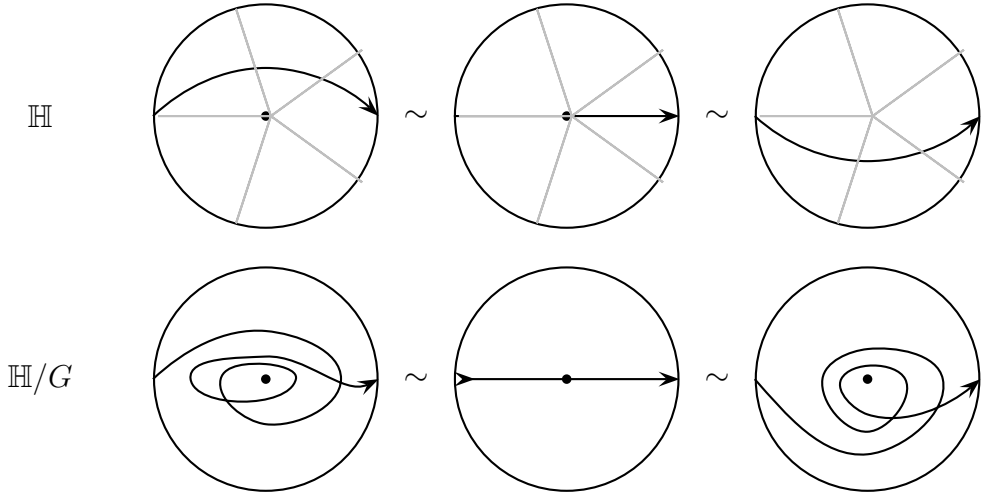
\begin{figure}[htbp]
\begin{pspicture}(16,6.5)

\rput(1,1.5){$\H/G$}
\rput(1,5){$\H$}

\rput(6,5){$\sim$}
\rput(10,5){$\sim$}
\rput(6,1.5){$\sim$}
\rput(10,1.5){$\sim$}

\pscircle(4,5){1.5}
\psdot(4,5)
\psecurve[arrowsize=0.2,showpoints=false]{->}(2,4)(2.5,5)(4,5.64)(5.5,5)(6,4)

\rput(4,5){
\PstPolygon[PolyIntermediatePoint =0.,PolyRotation=180, unit	= 1.5,linecolor=lightgray]
}

 \pscircle(8,5){1.5}
\rput(4,0){\psline[arrowsize=0.2]{->}(2.5,5)(5.5,5)
\psdot(4,5)}

\rput(8,5){
\PstPolygon[PolyIntermediatePoint =0.,PolyRotation=180, unit	= 1.5,linecolor=lightgray]
}

\pscircle(12,5){1.5}
\rput(8,0){\psecurve[arrowsize=0.2,showpoints=false]{->}(2,6)(2.5,5)(4,4.4)(5.5,5)(6,6)}
\rput(12,5){
\PstPolygon[PolyIntermediatePoint =0.,PolyRotation=180, unit	= 1.5,linecolor=lightgray]
}

\pscircle(4,1.5){1.5}
\rput(0,-3.5){
\psdot(4,5)
\psecurve[arrowsize=0.2,showpoints=false]{->}(2,4)(2.5,5)(4,5.64)(5,5)(4,4.4)(3.4,5)(4,5.2)(4.4,5)(3,5)(4,5.3)(5.5,5)
(6,6)

}

\pscircle(8,1.5){1.5}
\psline[arrowsize=0.2,showpoints=false]{>->}(6.5,1.5)(9.5,1.5)
\psdot(8,1.5)

\pscircle(12,1.5){1.5}
\psdot(12,1.5)
\rput(8,-3.5){

\psecurve[arrowsize=0.2,showpoints=false]{->}(2,6)(2.5,5)(4,4)(5,5)(4,5.4)(3.4,5)(4,4.3)(4.5,5)(3.65,5)(4,4.53)(5.5,5)
(6,6)

}

 \end{pspicture}
 \caption{Skein relations for points order  $n=5$ (bottom) and the corresponding lifts (top).}\label{skein odd}
\end{figure}

\end{definition}

\begin{remark}\label{n turns} The skein relations depicted in Figures~\ref{skein even} and \ref{skein odd} imply  that a loop going  $n$ times in either direction around a point of order $n$ can be "erased" from a closed curve (Figure~\ref{skein}). However, note the the skein relation in Figure~\ref{skein} is less precise than  Definition~\ref{homotopy}. Namely, this relation  does not "tell" as   Definition~\ref{homotopy} does tell how to homotope a curve passing through a cone point. Since some geodesics do pass through cone points, we need the  skein relation in Definition~\ref{homotopy} that deals with those cases.
\end{remark}

\begin{figure}[htbp]
\begin{pspicture}(16,6.5)
%
\rput(1,1.5){Case $n=3$}
\rput(1,5){Case $n=2$}
\rput(6,5){$\sim$}
\rput(10,5){$\sim$}
\rput(6,1.5){$\sim$}
\rput(10,1.5){$\sim$}

\pscircle(4,5){1.5}
\psdot(4,5)
\psecurve[arrowsize=0.2,showpoints=false]{->}(2,4)(2.5,5)(4,5.64)(5,5)(4,4.4)(3.4,5)(4,5.2)(4.4,5)(2.5,5)(2,6)

\pscircle(8,5){1.5}
\rput(4,0){\psecurve[arrowsize=0.2]{->}(2,3)(2.5,5)(3.4,5.2)(3.4,4.8)(2.5,5)(2,6)
\psdot(4,5)}

\pscircle(12,5){1.5}
\rput(8,0){\psecurve[showpoints=false,arrowsize=0.2]{<-}(2,4)(2.5,5)(4,5.64)(5,5)(4,4.4)(3.4,5)(4,5.2)(4.4,5)(2.5,5)(2,6)
\psdot(4,5)
}

\pscircle(4,1.5){1.5}
\rput(0,-3.5){
\psdot(4,5)
\psecurve[arrowsize=0.2,showpoints=false]{->}(2,4)(2.5,5)(4,6.2)(5,5)(3.2,5)(4,5.8)(4.7,5)(3.5,5)(4,5.5)(4,4.2)(2.5,5)
(1,6)

}

\pscircle(8,1.5){1.5}
\rput(4,-3.5){\psecurve[arrowsize=0.2]{->}(2,3)(2.5,5)(3.4,5.2)(3.4,4.8)(2.5,5)(2,6)
\psdot(4,5)}

\pscircle(12,1.5){1.5}
\psdot(12,1.5)
\rput(8,-3.5){

\psecurve[arrowsize=0.2,showpoints=false]{<-}(2,4)(2.5,5)(4,6.2)(5,5)(3.2,5)(4,5.8)(4.7,5)(3.5,5)(4,5.5)(4,4.2)(2.5,5)
(1,6)
}

 \end{pspicture}
 \caption{Consequence of skein relations for points order $n=2$ (upper) and $n=3$ (lower).}\label{skein}
\end{figure}
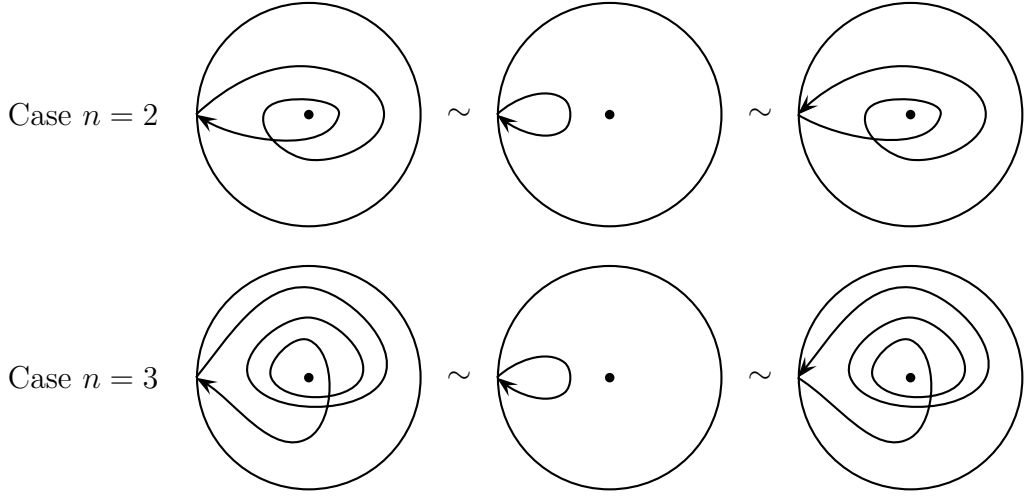

The proof of the next result is very similar to that of the (standard) proof of a bijection between
 free (usual) homotopy classes of closed curves on a path-connected space and conjugacy classes of the fundamental group of the space (see, for instance, \cite[Chapter 1, Exercise 6]{Hatcher}).

\begin{theorem}\label{bij} There is a natural bijection between the set of conjugacy classes of $G$ and the set of $\H/G$-free homotopy classes of closed oriented curves in $\H/G$.

\end{theorem}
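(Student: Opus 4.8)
The plan is to produce a pair of mutually inverse maps between the two sets, following the classical identification of free homotopy classes of maps $\SI\to X$ with conjugacy classes in $\pi_1(X)$: here $\H$ plays the role of the universal cover and $G$ that of the orbifold fundamental group, a closed curve being recorded by the ``deck transformation'' carrying its full lift to its $2\pi$-translate. The one genuinely new point, which I expect to be the main obstacle, is that an $\H/G$-homotopy in general position, where it crosses a cone point of order $n$, realizes a composition of the skein moves of Definition~\ref{homotopy}; checking this requires the branched local model $z\mapsto z^n$.

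First I would construct a map $\Psi$ from conjugacy classes of $G$ to $\H/G$-homotopy classes. Let $\mathcal{F}\subset\H$ be the (discrete) set of points fixed by a nontrivial element of $G$. Given $g\in G$, choose $p\in\H\setminus\mathcal{F}$, join $p$ to $gp$ by a path in $\H\setminus\mathcal{F}$, extend it $G$-equivariantly to $\hat\alpha_g\co\R\to\H$ with $\hat\alpha_g(t+2\pi)=g\,\hat\alpha_g(t)$, and let $\alpha_g$ be the closed curve with this full lift; then $\alpha_g$ passes through no cone point. Replacing $g$ by $hgh^{-1}$ and $p$ by $hp$ replaces $\hat\alpha_g$ by $h\hat\alpha_g$, an equivalent representative, so the curve depends only on $\langle g\rangle$ and on the chosen path. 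If the path is changed, the two choices are homotopic rel endpoints in the contractible space $\H$; extending such a homotopy $G$-equivariantly and projecting it to $\H/G$ yields a homotopy of closed curves, and after perturbing this homotopy (through a lift) to general position its preimage of the set of cone points consists of finitely many points, none on $\SI\times\{0,1\}$. Near each of them one lifts to a round disk about a lift $\tilde P$ of the cone point $P$, on which $\pro$ is, in suitable coordinates, the branched model $z\mapsto z^n$: at the instant of crossing, a straight arc through $\tilde P$ projects to an arc through $P$ whose ends are antipodal if $n$ is odd and coincide if $n$ is even, while the nearby arcs missing $\tilde P$ project to arcs winding $(n-1)/2$, respectively $n/2$, times about $P$ in one of the two senses --- precisely the configurations of Definition~\ref{homotopy}. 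So the projected homotopy can be replaced by a genuine $\H/G$-homotopy, and $\Psi$ is well defined.

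For the reverse map, by applying the skein relation of Definition~\ref{homotopy} at each of the finitely many cone points it traverses, every closed curve in $\H/G$ is $\H/G$-homotopic to one whose full lift avoids $\mathcal{F}$. For such a curve $\pro\circ\hat\alpha=\alpha\circ\pri$ gives $\pro\hat\alpha(t+2\pi)=\pro\hat\alpha(t)$, hence $\hat\alpha(t+2\pi)=g_t\hat\alpha(t)$ for some $g_t\in G$; since $\hat\alpha(t)\notin\mathcal{F}$ the element $g_t$ is unique, and by proper discontinuity of the $G$-action on $\H$ the map $t\mapsto g_t$ is locally constant, hence equal to a single $g\in G$. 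Passing to an equivalent representative $h\hat\alpha$ replaces $g$ by $hgh^{-1}$, so $\langle g\rangle$ is well defined; call it $\Phi(\alpha)$. The two maps are mutually inverse: $\Phi(\alpha_g)=\langle g\rangle$ by construction, and if $\alpha$ has full lift avoiding $\mathcal{F}$ with deck transformation $g$, then $\hat\alpha$ and $\hat\alpha_g$ are $g$-equivariant paths in $\H$, so joining their basepoints by a path in $\H\setminus\mathcal{F}$, filling the resulting rectangle in the contractible space $\H$, projecting, and correcting the finitely many cone-point crossings by skein moves as above gives an $\H/G$-homotopy $\alpha\sim\alpha_g$. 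Since every closed curve is $\H/G$-homotopic to a cone-point-free one, this yields the asserted bijection between conjugacy classes of $G$ and $\H/G$-homotopy classes of closed oriented curves and shows in particular that $\Phi$ is constant on $\H/G$-homotopy classes; naturality is clear from the construction.
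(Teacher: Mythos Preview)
Your proof is correct and follows precisely the approach the paper indicates: the paper does not actually give a proof of Theorem~\ref{bij} but only remarks that it is ``very similar to that of the (standard) proof of a bijection between free (usual) homotopy classes of closed curves on a path-connected space and conjugacy classes of the fundamental group of the space,'' referring to Hatcher. You have supplied the details the paper omits, in particular the verification via the branched local model $z\mapsto z^n$ that a generic homotopy in $\H$, after projection, realizes a composition of the skein moves of Definition~\ref{homotopy} at each cone-point crossing; this is the one point where the orbifold argument genuinely departs from the classical one, and your treatment of it is sound.
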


If $a$ and $b$ in are two elements of $\H/G$,
\emph{the intersection number of $a$ and $b$} is  the minimum number (counted with multiplicity) of transversal intersection points of pairs of loops representing of $a$ and $b$ not passing through cone points. 

\begin{remark}\label{zero} If at least one of the elements,   $a$ or $b$ is  the conjugacy class of a non-hyperbolic element of $G$ then the intersection  number of $a$ and $b$ is zero. (Conjugacy classes of elements of $G$ are identified  with free homotopy classes of curves on $\H/G$ via Theorem \ref{bij}).
\end{remark}

\subsection{Labeling intersection points - The group theoretic intersection number}\label{labeling}

 A hyperbolic isometry $x$ acts on its axis $A_x$ by translation by a real number $\tax$, the  \emph{translation length of $x$}. 
We orient the axis $A_x$   so that for each point $P$ in $A_x$, the direction from $P$ to $xP$ is positive.
 
Let $x,y\in G$. Denote by $X\backslash G/Y$ the space of double cosets $XgY$ where $g \in G$,  $X$ and $Y$ denote the cyclic subgroups generated by $x$ and $y$ respectively.  If $x$ or $y$ is not hyperbolic, set $I(x,y)=\emptyset$, otherwise,  set
 $I(x,y) =\{XgY \in X\backslash G/Y \mbox{ such that  } A_x \cap hA_y \ne \emptyset \mbox{ for some $h$ in $XgY$}\}$.

(Observe that if $A_x \cap h A_y \ne \emptyset$ for some $h \in XgY$ if and only if   $A_x \cap h A_y \ne \emptyset$ for all $h \in XgY$.) 

Scott \cite{Scott} discusses intersection number of closed curves on compact surfaces. The next proposition can be proven by arguments completely analogous to those of Scott \cite[Section 1]{Scott}. The point is that $\H/G$-homotopy after lifting becomes exactly like usual homotopy in the universal cover. Thus our discussion and Scott's are "mutatis mutandi" as far as the proposition below is concerned.

\begin{proposition}\label{isom} 
Let $x$ and $y$ be elements of $G$. Then the intersection number of $\Omega x$ and $\Omega y$ equals  the cardinality of $I(x,y)$.
\end{proposition}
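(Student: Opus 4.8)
The plan is to follow Scott's argument from \cite[Section 1]{Scott}, adapted to the orbifold setting, the key point being (as noted in the paragraph preceding the statement) that $\H/G$-homotopy lifts to ordinary homotopy in $\H$, so the combinatorics of minimal position are governed entirely by the action of $G$ on $\H$ and its boundary circle. First I would reduce to the hyperbolic case: if $x$ or $y$ is non-hyperbolic, then by Remark \ref{zero} the intersection number of $\Omega x$ and $\Omega y$ is zero, and by definition $I(x,y)=\emptyset$, so both sides are $0$. So assume $x$ and $y$ are both hyperbolic, with axes $A_x$ and $A_y$.

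The core of the argument is to realize the minimal intersection geometrically. The idea is that a geodesic representative of $\Omega x$ (the image of $A_x$ in $\H/G$) and a geodesic representative of $\Omega y$ (the image of $A_y$) are in minimal position, and the transversal intersection points of these two geodesics on $\H/G$ are in bijection with $I(x,y)$. Concretely: intersection points of the projected geodesics correspond to $G$-orbits of pairs $(P, g)$ where $P \in A_x \cap gA_y$; quotienting the $x$-action on the left (which moves $P$ along $A_x$) and the $y$-action on the right (which reparametrizes $gA_y$) identifies such orbits precisely with double cosets $XgY$ for which $A_x \cap gA_y \neq \emptyset$. The parenthetical remark in the statement---that $A_x \cap hA_y \neq \emptyset$ for some $h \in XgY$ iff it holds for all such $h$---is what makes $I(x,y)$ well-defined and is checked by noting that replacing $h$ by $x^m h y^n$ either translates along $A_x$ or along $gA_y$, neither of which affects whether the two axes meet. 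One must also observe $A_x \cap gA_y$ is at most a single point when $Xg Y$ contributes (distinct axes of a discrete group meet in at most one point unless they coincide, and if they coincided $x$ and $gyg^{-1}$ would be powers of a common element, a degenerate case one handles separately or rules out by the transversality/non-conjugacy context), so each double coset contributes exactly one intersection point.

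It then remains to show this geometric count is the \emph{minimal} count, i.e. that no homotopy can reduce the number of intersection points below $|I(x,y)|$, and that cone points cause no trouble. Minimality is the standard bigon criterion: lifting to $\H$, any pair of representatives of $\Omega x$ and $\Omega y$ not in minimal position would yield (by an innermost-bigon / Freedman--Hass--Scott type argument, cf. \cite{FHS}) a bigon between a lift of one and a lift of the other, and since the geodesic representatives bound no bigons---two distinct geodesics in $\H$ meet at most once---the geodesic representatives achieve the minimum. For the orbifold subtlety: since geodesics may pass through cone points, I would perturb the geodesic representatives slightly off the cone points (Remark \ref{n turns}, and the definition of intersection number requires loops missing the cone points anyway), noting this perturbation can be done without creating or destroying intersection points because near a cone point the geodesic is a single embedded arc.

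The main obstacle I anticipate is the careful bookkeeping in the double-coset bijection---in particular verifying the "some $h$ iff all $h$" claim cleanly, ruling out (or correctly accounting for) the coincident-axes degenerate case, and confirming that the left $X$-action and right $Y$-action on $\{(P,g): P \in A_x \cap gA_y\}$ really do quotient to give exactly $I(x,y)$ with the right multiplicity. Everything else is, as the paper says, "mutatis mutandis" from Scott's surface argument.
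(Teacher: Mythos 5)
Your proposal follows the same route as the paper, which gives no detailed argument but simply observes that $\H/G$-homotopy lifts to ordinary homotopy in $\H$ and then invokes Scott \cite[Section 1]{Scott} ``mutatis mutandis''; your sketch (reduction to the hyperbolic case via Remark \ref{zero}, the double-coset bookkeeping for axis crossings, the bigon/geodesic minimality argument in the spirit of \cite{FHS}, and perturbing off cone points) is exactly the content that citation is meant to supply. The points you flag as needing care --- the ``some $h$ iff all $h$'' check and the coincident-axes case --- are genuine but routine, and your plan handles them in the intended way.
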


\section{The Goldman bracket for orbifolds}\label{orbi}
Recall that $\cc$ denotes  the set of conjugacy classes of  elements in $G$. Consider $\Z[\cc]$, the free module generated by $\cc$.
For $x\in G$,  let $\conjj{x}$ denote the conjugacy class of $x$. In particular,  $\la x\ra\in\Z[\cc]$.

In this section we will define a linear map 
$[\cdot,\cdot]:\Z[\cc]\otimes\Z[\cc]\to \Z[\cc]$  and show in Subsection \ref{Jacobi} that it is a Lie bracket. 
 This bracket generalizes Goldman's to orientable two dimensional orbifolds and will be defined (as Goldman's) on two elements of the basis of  $\Z[\cc]$ by considering the intersection points of certain pair of representatives (Subsection~\ref{labeling}) , assigning a signed free homotopy class to each of these points  (the signed product at the intersection point) and adding up all those terms.

 For elements $a$ and $x$  in $ G$,  let $x^a$ denote $axa^{-1}$.  The isometry $x^a$ is also a hyperbolic, it has the same   translation length as $x$,  $\taxx{x^a}=\tax$, and the axis of $x^a$ given by $a\cdot A_x$. From now on, fix an orientation of $\H$. Also, for $x$ and $y$ in $G$ set  $\inu(x,y)$ to be zero  if the axes of  $x$ and $y$ do not cross and to be   the sign of the crossing, otherwise. Finally, set
\begin{equation}\label{brackdblcst}
[\conjj{x},\conjj{y}]=\sum_{XbY\in I(x,y)} \inu(x,y^b)\la xy^b\ra. 
\end{equation}

\begin{notation}\label{segment} Let  $P $ be a point in the axis $A_x$ of a hyperbolic transformation $x$. If $r$ is a positive real number,  $S(x,P,r)$ denotes  the segment of  $A_x$ of length $r$ starting (and including)   $P$,  but not the other endpoint, in the positive direction of $A_x$.   If $r$ is a negative number, $S(x,P,r)$  denotes the segment starting at a point $Q$ at distance $r$ of $P$ in the negative direction, containing $Q$ but not $P$. 
\end{notation}

\begin{remark}\label{pairs1} Fix a point $P$ in $ A_x$ and let $r$ be the translation length of $x$. Let $$J(x,y,P)=\{gY \in G/Y \mbox{ such that  } S(x,P,r)\cap hA_y \ne \emptyset \mbox{ for some } h \in gY\}.$$  Then there is a bijection between $I(x,y)$   and $J(x,y,P)$. Since $G$ is a discrete group, both sets have finite cardinality. Moreover,
\begin{equation}\label{brackdblcst1}
[\la x\ra,\la y \ra]=\sum_{gY\in J(x,y,P)} \inu(x,y^g)\la xy^g\ra. 
\end{equation}

 \end{remark}

\begin{remark}\label{2} The conjugacy classes of elliptic and parabolic elements of $G$ are in the center of the Lie algebra, that is, the bracket between these classes and all other classes is zero.
 \end{remark}

\begin{remark}\label{product} By \cite[Theorem 7.38.6]{B}, if $x$ and $y$ are hyperbolic isometries whose axes intersect then $xy$ is also hyperbolic. Moreover, the axis of $xy$ and its translation length can be determined as follows (see \cite{B} for details). Denote by $P$ the intersection point of $A_x$ and $A_y$.  
Denote by $Q$ the point on $A_x$ at distance $\tax/2$ of $P$ in the positive direction of $A_x$ and by $R$ the point on $A_y$ at distance $\tay/2$ of $P$ in the negative direction of $A_y$. The axis of $A_{xy}$ is the oriented line from $R$ to $Q$ and the translation length of $xy$ equals twice the distance between $R$ and $Q$. (See Figure \ref{m3}. This is one of the "triangles" mentioned in the introduction which are used to unravel the Jacobi relation.)

\begin{figure}[htbp]
\begin{pspicture}(16,3.4)

\rput(7.12, 2){$\tax/2$}
\rput(8.88, 2){$\tay/2$}
\rput(8, 0.68){$\taxy /2$}

\rput(6,0){
\psline{<-}(2.5,3)(0,0) 
\psline{->}(1.5,3)(4,0)
\psline{->}(0,1)(4,1)

\psdot(2,2.42)
\rput(2,2.8){$P$}

\psdot(3.2,1)
\rput(3.1,0.7){$Q$}

\psdot(0.8,1)
\rput(0.8,0.7){$R$}

\rput(1.2,3.1){$A_x$}
\rput(2.8,3.){$A_y$}

\rput(4.3,1){$A_{xy}$}

}

 \end{pspicture}
 \caption{The axis of $xy$}\label{m3}
\end{figure}
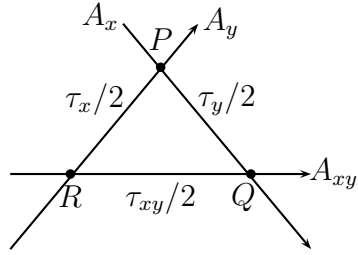

\end{remark}

\begin{remark}\label{pairs} Consider the set of  pairs of cosets $G/X \times G/Y$. The group $G$ acts on the set $G/X \times G/Y$ by $(Xg, Yh)\mapsto (Xga, Yha)$, for each $a \in G$. Denote by $D(x,y)$ the quotient under this action. Set $\map{f}{D(x,y)}{X\backslash G/Y}$ by mapping the equivalent class of $(Xg, Yh)$ to
$Xgh^{-1}Y$. A straightforward computation shows that $f$ is well defined and it is a bijection.  Also, the preimage under $f$ of an element $XkY$ of $I(x,y)$ is the set of equivalence classes of pairs of cosets $(Xg, Yh)$ such that $gA_x \cap hA_h \ne \emptyset$ and $gh^{-1}=k$. Moreover,
\begin{equation}\label{bracket}
[\la x\ra,\la y \ra]=\sum_{(Xa, Yb)\in D(x,y)} \inu(x^a,y^b)\la x^ay^b\ra. 
\end{equation}

 \end{remark}

\section{Triple brackets and the Jacobi identity}\label{Jacobi}

Jacobi identity for the extended bracket can be probably proved by argument analogous to those used by Goldman in his proof that the bracket of curves on surfaces statisfies the Jacobi identity.

In this section we present a geometric proof of the Jacobi identity, that does not use transversality.

Let $x$ be  a hyperbolic isometry and let $P \in A_x$. The next result is stated using Notation \ref{segment}

\begin{lemma}\label{triple} The following equation holds.
$$[[\conjj{x},\conjj{y}],\conjj{z}]=\sum_{(XgY,XhZ)\in T} \inu (x,y^g)\inu(x,z^h)\la xy^gz^h\ra +
\sum_{(XgY,YhZ)\in U} \inu (x,y^g)\inu(y^g,z^{h})\la xy^gz^h\ra.$$
where $$T=\{(XgY,XhZ) :  A_x \cap gA_y = \{P\}, S(x,P,\tax) \cap h A_z \ne \emptyset ,$$$$ h A_z \cap (S(y^g,P, -\tay/2)\cup S(xP,y^{xg},\tay/2)=\emptyset \}\mbox{ and }$$  
$$U=\{(XgY,YhZ) :  A_x \cap gA_y = \{P\}, (S(P,y^g,-\tay/2)\cup S(xP,y^{xg},\tax/2))\cap h A_z \ne \emptyset ,$$$$  S(P,x,\tax)  \cap h A_z = \emptyset\}.$$
\end{lemma}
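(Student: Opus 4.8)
The plan is to compute $[[\conjj{x},\conjj{y}],\conjj{z}]$ by first expanding the inner bracket using \eqref{brackdblcst1} with a carefully chosen basepoint, then expanding the outer bracket, and finally re-indexing the resulting double sum so that each term is attributed to an intersection of $hA_z$ with a single segment of the axis $A_x$ (giving the set $T$) or with a single segment of a translate of $A_y$ (giving the set $U$). First I would fix $P\in A_x$ and write $[\conjj x,\conjj y]=\sum_{gY\in J(x,y,P)}\inu(x,y^g)\la xy^g\ra$ as in Remark~\ref{pairs1}. For each such $g$, the product $w:=xy^g$ is hyperbolic by Remark~\ref{product}, since by definition of $J(x,y,P)$ the axes $A_x$ and $gA_y=A_{y^g}$ intersect; moreover Remark~\ref{product} identifies the axis $A_w$ and the basepoint on it explicitly: with $Q$ the point on $A_x$ at distance $\tax/2$ past $P$ and $R$ the point on $A_{y^g}$ at distance $\tay/2$ before $P$, the axis $A_w$ runs from $R$ to $Q$ and $\tau_w=2\,d(R,Q)$.

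Next I would expand $[\la w\ra,\conjj z]$ again via \eqref{brackdblcst1}, this time using the basepoint $R\in A_w$, so that the relevant segment is $S(w,R,\tau_w)$. The key geometric observation — this is really the content of the lemma — is that the fundamental segment $S(w,R,\tau_w)$ of $A_w$, which runs from $R$ to $wR$ and has length $2d(R,Q)$, can be reconstructed from pieces of $A_x$ and of translates of $A_y$: tracing along $A_w$ from $R$, the relevant crossings of $hA_z$ with $S(w,R,\tau_w)$ split according to whether they occur "on the $A_x$ side" near the segment $S(x,P,\tax)$ or "on the $A_{y^g}$ side" near the segments $S(y^g,P,-\tay/2)$ and $S(y^{xg},xP,\tay/2)$. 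The sign $\inu(w,z^h)$ of a crossing of $A_w$ is then equal, by an elementary planar argument about the triangle with vertices $P$, $Q$, $R$ (the triangle of Figure~\ref{m3}), to $\inu(x,z^h)$ in the first case and to $\inu(y^g,z^h)$ in the second; and $\inu(x,y^g)$ carries through as a common factor. Assembling these gives exactly the two sums over $T$ and $U$, where the conditions defining $T$ (resp. $U$) record that $hA_z$ meets $S(x,P,\tax)$ but avoids the two $y$-segments (resp. meets one of the $y$-segments but avoids $S(x,P,\tax)$), and the conjugacy class $\la xy^gz^h\ra=\la wz^h\ra$ is the one prescribed by \eqref{brackdblcst1}.

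The main obstacle will be the bookkeeping in the last step: making precise the claim that the crossings of $hA_z$ with the fundamental segment $S(w,R,\tau_w)$ are in sign-preserving bijection with crossings distributed among $S(x,P,\tax)$, $S(y^g,P,-\tay/2)$ and $S(y^{xg},xP,\tau_y/2)$, with no double-counting at the seams $P$, $Q$, $R$ and no missed crossings. The clean way to handle this is the "line meets a side of a triangle hence meets another side" principle advertised in the introduction: the triangle $PQR$ has $A_w$ as (an extension of) one side, $A_x$ (through $P$, $Q$) and $A_{y^g}$ (through $P$, $R$) as the lines carrying the other two sides; a translate $hA_z$ of the axis of $z$ either crosses the side $RQ$ of $A_w$ or not, and when it does, it crosses exactly one of the other two sides, which lets one attribute it unambiguously and read off the sign. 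One must also check the translation-length normalizations ($\tax/2$ and $\tay/2$ appearing in the segment lengths) match Remark~\ref{product}, and that half-open conventions in Notation~\ref{segment} are chosen consistently so endpoints are counted once; these are routine once the geometric picture is fixed.
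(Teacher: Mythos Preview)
Your proposal is correct and follows essentially the same route as the paper: expand the inner bracket via Remark~\ref{pairs1}, identify the axis $A_{xy^g}$ through Remark~\ref{product}, expand the outer bracket from the basepoint $R$, and split the crossings of $hA_z$ with a fundamental segment of $A_{xy^g}$ into those attributable to the $x$-segment $S(x,P,\tax)$ and those attributable to the two $y$-half-segments. The only refinement the paper adds is that the fundamental segment $S(xy^g,R,\tau_{xy^g})$ runs from $R$ past $Q$ all the way to $xy^g(R)$, so the ``line meets a triangle'' argument is actually applied to \emph{two} triangles, $R\,P\,Q$ and $Q\,(xP)\,(xy^gR)$, rather than the single triangle $PQR$ you name; but since you already list the correct segment $S(y^{xg},xP,\tay/2)$ coming from the second triangle, this is only a matter of making the picture explicit.
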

\begin{proof}

Let $g \in G$ such that $A_x \cap g A_y \ne \emptyset$.  We can retrace the steps of the construction described in Remark \ref{product} to find $A_{xy^g}$ (Figure \ref{m4}).  Next, we  compute $[\la xy^g\ra,\la z\ra]$.  Denote by $P$ the intersection point between $A_x$ and $gA_y$, by $S$ the intersection point of $A_x$ with $A_{xy^g}$ and by  $R$  the  intersection point of $gA_y$ and $A_{xy^g}$. Finally, denote by $Z$ the cyclic group generated by $z$. By Remark \ref{pairs1}
$$
[\la xy^g\ra,\la z\ra]=\sum_{\substack{hZ \in G/Z, \\
S(R,xy^g, \taxy ) \cap h A_z \ne \emptyset}} \inu(xy^g, z^h) \la xy^gz^h\ra.
$$
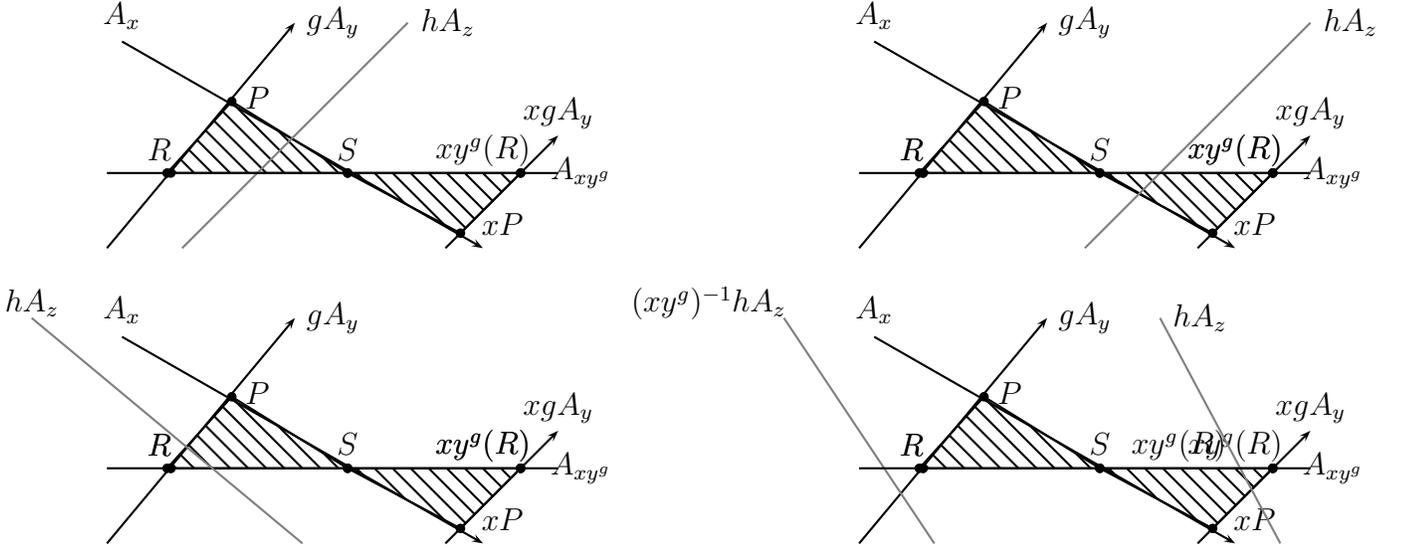
\begin{figure}[htbp]
\begin{pspicture}(16,3.4)

\psdot(0.85,1)\rput(0.7,1.31){$R$}
\psdot(5.5,1)\rput(5,1.3){$xy^g(R)$}
\pspolygon[showpoints=true,fillstyle=vlines](0.8,1)(1.66,1.952)(3.2,1)
\pspolygon[showpoints=true,fillstyle=vlines](5.5,1)(4.7,0.2)(3.2,1)
\rput(2,2){$P$}
\rput(5.26,0.322){$xP$}
\rput(3.2,1.32){$S$}

\rput(0.2,3.1){$A_x$}\psline{->}(0.2,2.75)(5,0.)

\rput(3,3.){$gA_y$}\psline{<-}(2.5,3)(0,0)
\rput(6.,1.8){$xgA_y$}\psline[]{<-}(6,1.5)(4.5,0)

\rput(4.53,3.){$hA_z$} \psline[linecolor=gray](4,3)(1,0)

\rput(6.3,1){$A_{xy^g}$}\psline(0,1)(6,1)

\rput(10,0){
\psdot(0.85,1)\rput(0.7,1.31){$R$}
\psdot(5.5,1)\rput(5,1.3){$xy^g(R)$}
\pspolygon[showpoints=true,fillstyle=vlines](0.8,1)(1.66,1.952)(3.2,1)
\pspolygon[showpoints=true,fillstyle=vlines](5.5,1)(4.7,0.2)(3.2,1)
\rput(2,2){$P$}
\rput(5.26,0.322){$xP$}
\rput(3.2,1.32){$S$}

\psdot(0.85,1)\rput(0.7,1.31){$R$}

\psdot(5.5,1)\rput(5.,1.3){$xy^g(R)$}

\rput(0.2,3.1){$A_x$}\psline{->}(0.2,2.75)(5,0.)

\rput(3,3.){$gA_y$}\psline{<-}(2.5,3)(0,0)

\rput(6.,1.8){$xgA_y$}\psline[]{<-}(6,1.5)(4.5,0)

\rput(6.53,3.){$hA_z$} \psline[linecolor=gray](6,3)(3,0)

\rput(6.3,1){$A_{xy^g}$}\psline(0,1)(6,1)

}

 \end{pspicture}

\hspace{1cm}

\begin{pspicture}(16,3.4)
\psdot(0.85,1)\rput(0.7,1.31){$R$}
\psdot(5.5,1)\rput(5,1.3){$xy^g(R)$}
\pspolygon[showpoints=true,fillstyle=vlines](0.8,1)(1.66,1.952)(3.2,1)
\pspolygon[showpoints=true,fillstyle=vlines](5.5,1)(4.7,0.2)(3.2,1)
\rput(2,2){$P$}
\rput(5.26,0.322){$xP$}
\rput(3.2,1.32){$S$}

\psdot(0.85,1)\rput(0.7,1.31){$R$}

\psdot(5.5,1)\rput(5,1.3){$xy^g(R)$}

\rput(0.2,3.1){$A_x$}\psline{->}(0.2,2.75)(5,0.)

\rput(3,3.){$gA_y$}\psline{<-}(2.5,3)(0,0)

\rput(6.,1.8){$xgA_y$}\psline[]{<-}(6,1.5)(4.5,0)

\rput(6.3,1){$A_{xy^g}$}\psline(0,1)(6,1)

\rput(-1,3.2){$hA_z$} \psline[linecolor=gray](-1,3)(2.6,0)

\rput(10,0){

\psdot(0.85,1)\rput(0.7,1.31){$R$}
\psdot(5.5,1)\rput(5,1.3){$xy^g(R)$}
\pspolygon[showpoints=true,fillstyle=vlines](0.8,1)(1.66,1.952)(3.2,1)
\pspolygon[showpoints=true,fillstyle=vlines](5.5,1)(4.7,0.2)(3.2,1)
\rput(2,2){$P$}
\rput(5.26,0.322){$xP$}
\rput(3.2,1.32){$S$}

\psdot(0.85,1)\rput(0.7,1.31){$R$}

\psdot(5.5,1)\rput(4.25,1.3){$xy^g(R)$}

\rput(0.2,3.1){$A_x$}\psline{->}(0.2,2.75)(5,0.)

\rput(3,3.){$gA_y$}\psline{<-}(2.5,3)(0,0)

\rput(6.,1.8){$xgA_y$}\psline[]{<-}(6,1.5)(4.5,0)

\rput(4.53,3.){$hA_z$} \psline[linecolor=gray](4,3)(5.6,0)

\rput(-2,3.2){$(xy^g)^{-1}hA_z$} \psline[linecolor=gray](-1,3)(1,0)

\rput(6.3,1){$A_{xy^g}$}\psline(0,1)(6,1)

}
\end{pspicture}

\caption{Jacobi Identity}\label{m4}
\end{figure}
Let $hZ \in G/Z$. Observe that the inequality $ I_{xy^g}^P \cap h A_z \ne \emptyset$ holds if and only if $hA_z$ crosses either the triangle with vertices   $R, P, S$ or the triangle with vertices $S, xP, xy^gR$  (Figure \ref{m4}).Thus, $hA_z$ intersects $I_{xy^g}^P$ if and only if   exactly one of the following holds:
\begin{numlist}
\item $S(P,x,\tax) \cap h A_z \ne \emptyset$ and $(S(y^g,P, -\tay/2)\cup S(xP,y^{xg},\tay/2))\cap h A_z = \emptyset$, or
\item $S(P,x,\tax) \cap h A_z =\emptyset$ and $(S(y^g,P, -\tay/2)\cup S(xP,y^{xg},\tay/2) \cap h A_z \ne \emptyset$. 
\end{numlist}
The first two condition corresponds to a term in the first  sum and the second condition, to terms in the second sum. Thus, this concludes the proof.

\end{proof}
 
A  corollary is the Jacobi identity.
\begin{theorem}
For $x,y,z\in G$,
$$[[x,y],z]+[[y,z],x]+[[z,x],y]=0.$$
Therefore, $[\cdot,\cdot]:\Z[\cc]\otimes\Z[\cc]\to \Z[\cc]$ is a Lie bracket.
\end{theorem}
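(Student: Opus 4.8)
The plan is to deduce the Jacobi identity directly from the formula for the triple bracket $[[\conjj{x},\conjj{y}],\conjj{z}]$ in Lemma~\ref{triple}. The key observation is that Lemma~\ref{triple} expresses $[[\conjj{x},\conjj{y}],\conjj{z}]$ as a sum of two types of terms: ``type $T$'' terms, where the axis $hA_z$ meets a translate of the axis of $x$ (in the segment $S(x,P,\tax)$), and ``type $U$'' terms, where $hA_z$ meets a translate of the axis of $y^g$ (in the union $S(y^g,P,-\tay/2)\cup S(xP,y^{xg},\tay/2)$, i.e.\ the relevant portion of the broken geodesic making up $A_{xy^g}$). Geometrically, the picture in Figure~\ref{m4} shows that $A_{xy^g}$ is $I^P_{xy^g}$-equivalent to the concatenation of a segment of length $\tax$ along $A_x$ and a segment of length $\tay$ along translates of $A_y$; a line crossing $A_{xy^g}$ therefore crosses one piece or the other (this is exactly the ``a line meeting a side of a triangle meets another side'' fact), and the two cases are mutually exclusive. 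First I would make this symmetric: set up the combinatorial data indexing \emph{all} the terms appearing in the three-fold cyclic sum $[[\conjj{x},\conjj{y}],\conjj{z}]+[[\conjj{y},\conjj{z}],\conjj{x}]+[[\conjj{z},\conjj{x}],\conjj{y}]$.

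The central step is to show that every term that can appear in one of the three double-double brackets is cancelled by exactly one term appearing in another of the three. Concretely, a ``type $U$'' term in $[[\conjj{x},\conjj{y}],\conjj{z}]$ records the configuration: $A_x$ meets $gA_y$ at a point, and $hA_z$ meets the axis of $xy^g$ at a point lying on the $gA_y$-portion of that broken geodesic (i.e.\ near the $y^g$ side). In this configuration the three axes $A_x$, $gA_y$, $hA_z$ are all mutually relevant, and one should check that the \emph{same} geometric configuration (same translates of the three axes, same triple point pattern) reappears as a term in a different double-double bracket --- e.g.\ in $[[\conjj{y},\conjj{z}],\conjj{x}]$ the roles are cyclically permuted, and the configuration where $gA_y$ meets $hA_z$ and $A_x$ meets the axis of the product shows up as a term. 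The signs work out so that these two occurrences carry opposite coefficients (this is where the orientation bookkeeping via $\inu(\cdot,\cdot)$ and the antisymmetry of the crossing sign enter, together with the ordering of factors in the loop product $\la xy^gz^h\ra$, which for purposes of the conjugacy class is cyclically invariant so that $\la xy^gz^h\ra$, $\la y^gz^hx\ra$, $\la z^hxy^g\ra$ all coincide). By contrast, a ``type $T$'' term in $[[\conjj{x},\conjj{y}],\conjj{z}]$, recording $A_x\cap gA_y\ne\emptyset$ and $hA_z$ meeting the $A_x$-portion of the broken axis, should similarly be matched --- in $[[\conjj{z},\conjj{x}],\conjj{y}]$ --- with a term of the opposite sign. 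So the plan is: classify the terms of each summand into the two types given by Lemma~\ref{triple}, set up an explicit pairing between the $3 \times 2$ boxes of term-types that matches each type against its partner in a cyclically-adjacent summand, and verify cancellation box by box. Since there are only finitely many configurations, and by discreteness of $G$ all the index sets are finite, this is a finite bookkeeping argument once the pairing is set up correctly.

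The main obstacle I expect is the sign verification: one must pin down precisely how the crossing sign $\inu(x,y^g)$, the sign $\inu(xy^g,z^h)$, and the corresponding signs in the cyclically permuted brackets relate, given a fixed arrangement of the three translated axes in the oriented hyperbolic plane. The geometric fact that makes this work is that the broken geodesic for $A_{xy^g}$ (two segments meeting at $S$, see Figure~\ref{m4}) has the $A_x$-segment entering at $R$ on $gA_y$ and the $y^g$-segment; a transversal $hA_z$ crossing near the $A_x$ side crosses $A_x$ itself with a definite sign, and crossing near the $y^g$ side crosses $gA_y$ with a definite sign; comparing these against the crossing signs that appear when we instead form the product $y^gz^h$ or $z^hx$ first requires tracking how ``take the product of two axis-crossing hyperbolics'' interacts with a third crossing. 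I would handle this by working entirely in the oriented hyperbolic plane with the triangle pictures of Figure~\ref{m4}, using that $\inu$ is the sign of an ordered crossing and that reversing the order of the two axes (equivalently, looking at the configuration from the viewpoint of a different pair among $x,y,z$) reverses the sign; once this local sign rule is established, the global cancellation is forced. Finally, I would note that the Jacobi identity plus the already-established bilinearity and skew-symmetry of $[\cdot,\cdot]$ give that $[\cdot,\cdot]\colon \Z[\cc]\otimes\Z[\cc]\to\Z[\cc]$ is a Lie bracket, completing the proof.
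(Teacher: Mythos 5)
Your proposal follows essentially the same route as the paper: apply Lemma~\ref{triple} to each of the three cyclic summands, so that the Jacobi sum decomposes into six groups of terms (the $T$-type and $U$-type terms of each summand), and then cancel these in pairs corresponding to the same configuration of three translated axes, i.e.\ the triangles of Figure~\ref{m4}. The pairing and sign bookkeeping you outline is exactly the content the paper compresses into its short proof, so this is the paper's argument, spelled out in somewhat more detail.
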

\begin{proof} The three terms of the Jacobi relation after applying  Lemma~\ref{triple} decompose into in six groups of  terms.  Among these, the pairs corresponding to the triangles of Figure~\ref{m4} cancel.
\end{proof}

\section{Examples}\label{examples}
Consider the modular group $PSL(2, \Z)$, that is,  the group consisting of all transformations $z \longrightarrow (a z +b)/(c  z+d)$, where $a,b,c,d \in \Z$ and $a d - b c =1$. This group is generated by $T(z) =z+1$ and $S(z) = -1/z$, with relations $S^2=1$ and $(ST)^3=1$.  The modular group is a finitely generated, discrete subgroup of orientation preserving isometries of the hyperbolic plane. Therefore, the bracket can be defined on the free module generated by conjugacy classes.  

Orient the hyperbolic plane clockwise.

By computing the traces,  one can see that the elements $x=TSTT$ and $y=TTTSTTT$ of $PSL(2,\Z)$ are hyperbolic and not conjugate.

\begin{figure}[htbp]
\begin{center}

\includegraphics[width=10cm,angle=90]{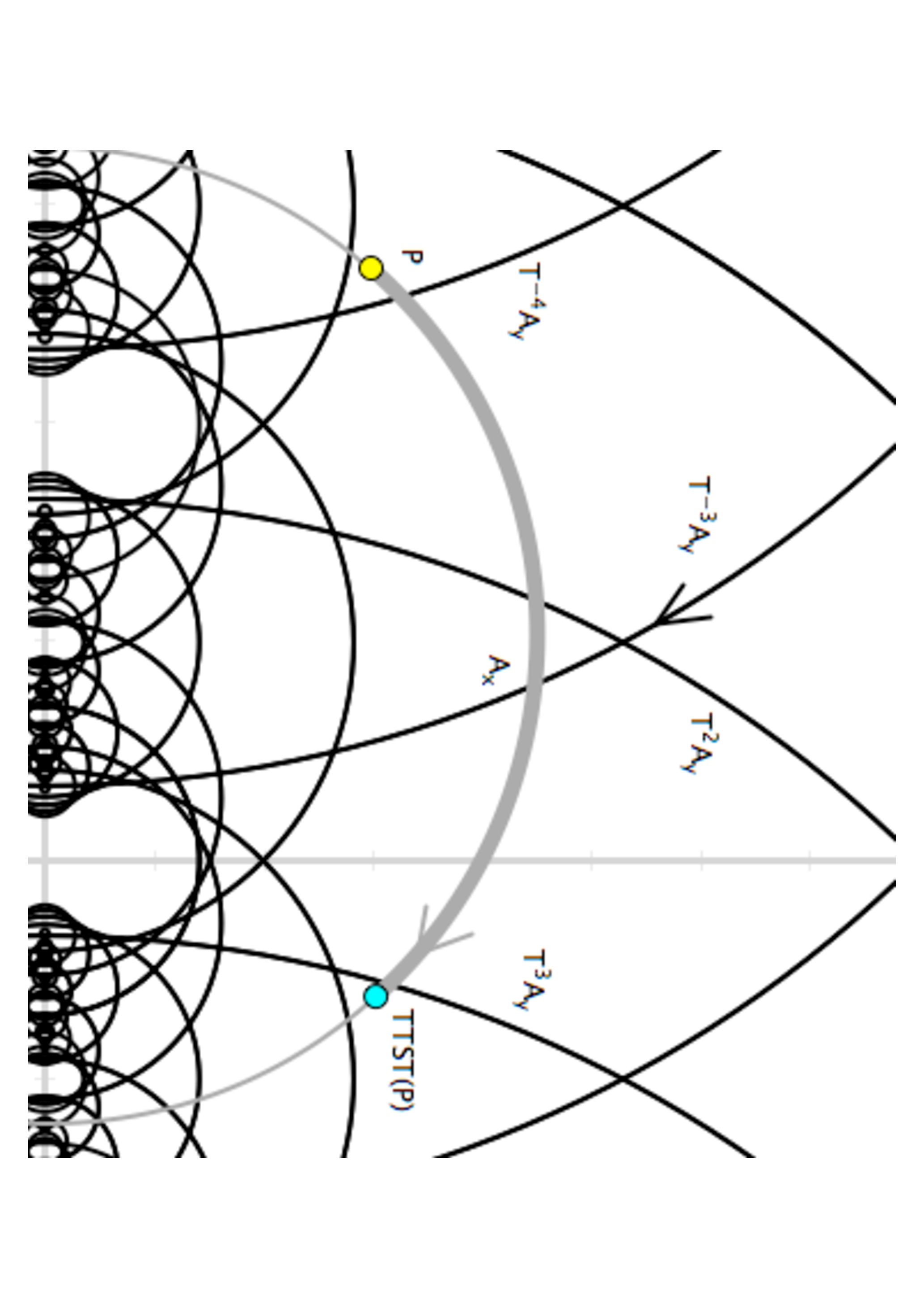}

\includegraphics[width=10cm,angle=90]{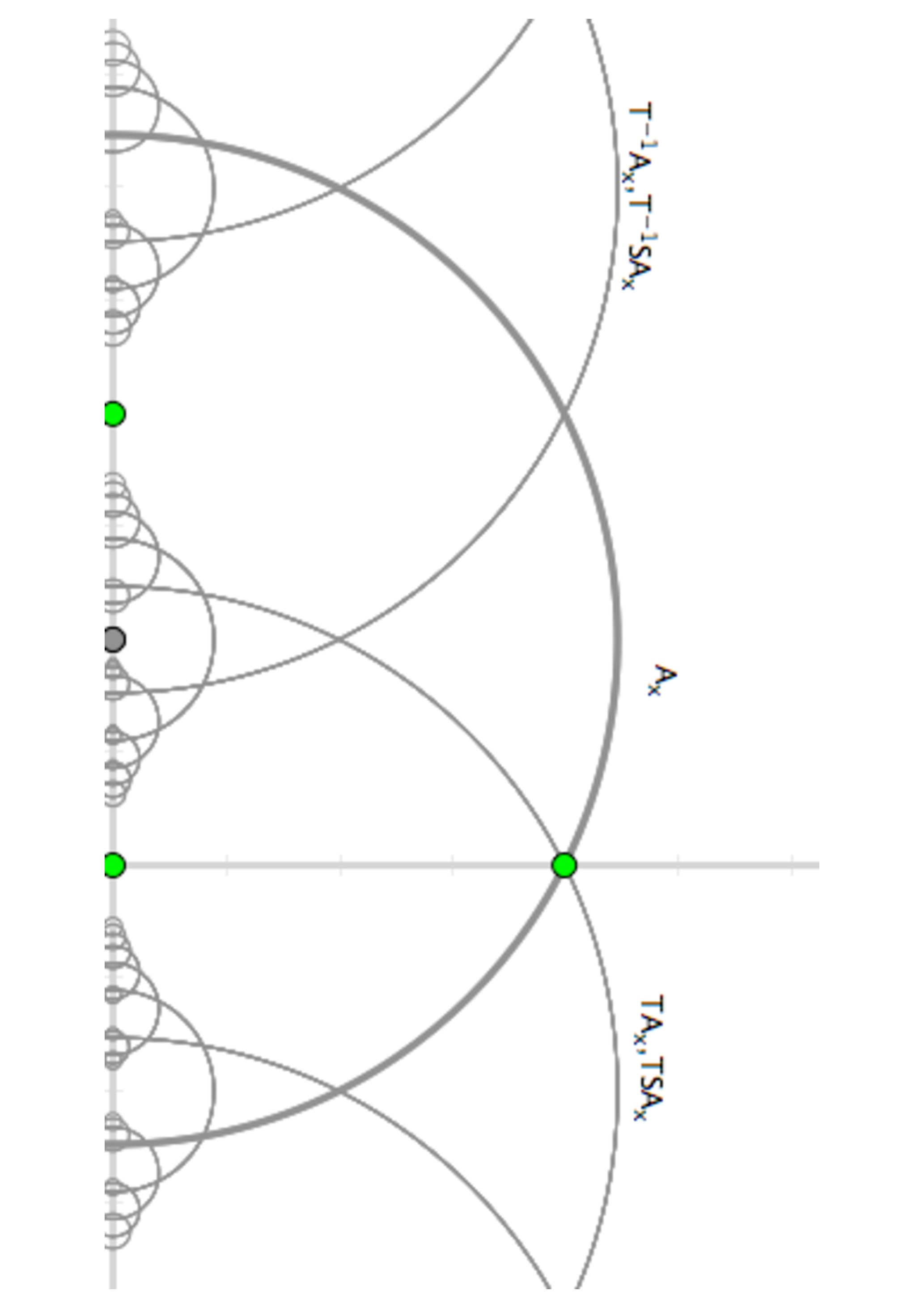}
\end{center}
\caption{Translates of $A_y$ (in black), and a fundamental domain of $A_x$  (in thick gray) where $x=TSTT$ and $y=TTTSTTT$ }\label{example}
\end{figure}

 As shown in Figure \ref{example}, there are exactly four translates of $y$ by $PSL(2,\Z)$ that intersect the segment of $A_x$  from  the point $P$  to $TTST(P)$.

In this example, $I(x,y)=\{X T^{-4} Y, X T^{-3} Y ,X T^{2} Y,X T^{3} Y\} $. A direct computation shows that  terms associated to the double cosets   $X T^{-4} Y$ and $XT^3Y$ are respectively $+\conjj{ST^6}$ and $-\conjj{ST^6}$. Also, the terms associated to $XT^{-3}$ and $XT^2$ are $+\conjj{STTST^7}$ and $-\conjj{STTST^7}$. Thus $[\conjj{x},\conjj{y}]=0$. 

In order to study  the brackets of  $\conjj{x^p}$ and $\conjj{y^q}$ when $p$ and $q$ are larger than one, one can use the criteria given in \cite{Katok1} for conjugacy in $SL(2,\Z)$ (and therefore in $PSL(2,\Z)$). Doing so, one can check that $[\conjj{x},\conjj{y^3}] \ne 0$. Moreover, the number of terms of  the bracket $[\conjj{x},\conjj{y^3}]$ (counted with multiplicity) equals twelve, which is three times the intersection number of $\conjj{x}$ and $\conjj{y}$.   

In the same way one can see that the  $[\conjj{x},\conjj{x^2}] = 0$ and $[\conjj{x},\conjj{x^3}] $ has 24 terms which is six times the self-intersection number of $\conjj{x}$.

The above calculations are computer assisted: One looks at Figure~\ref{example} (done with Cinderella) to identify the terms. Then uses Mathematica to calculate the terms, and study cancellation.

\section{Quantitative separation  of geodesics}\label{dog}

From now on, we assume that the discrete subgroup $G$ of $\mathrm{Isom}(\H)$  is finitely generated.

\begin{definition}\label{delta}
Fix  $\clo>0$, two geodesics  $\geo$ and $\geop$ and two (not necessarily distinct) points $P$ and $Q$ in $\geo$ and $\geop$ respectively. We say that $\geo$ and $\geop$ are \emph{$\clo$-close at $P$ and $Q$} if $d(P,Q)< \clo$ and,   if $\Upsilon$ denotes a geodesic passing through $P$ and $Q$, then the absolute value of the difference between the corresponding  angles between $\Upsilon$ and $A_x$ and $\Upsilon$ and $A_y$ (in the positive direction of both axes) is less than $\delta$.   If there exist points $P$ and $Q$ such two geodesics $\geo$ and $\geop$ are $\delta$-close at $P$ and $Q$, then we say that \emph{$\geo$ and $\geop$ are $\delta$-close}.
\end{definition}

The next lemma is well known to experts but we include a proof here  because we were unable to find one in the literature. 

\begin{lemma}\label{diverg} For each $L >0$ there exists $\clo>0$ such that if  $x$ and $y$ are two hyperbolic transformations in $G$  such that  $\tax \le L$ and $\tay \le L$ and  $A_x$ and $A_y$ are  $\clo$-close then $A_x= A_y$. 
\end{lemma}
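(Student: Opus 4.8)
For each $L>0$ there exists $\clo>0$ such that if $x,y$ are hyperbolic in $G$ with $\tax\le L$, $\tay\le L$, and $A_x$, $A_y$ are $\clo$-close, then $A_x=A_y$.

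The plan is to argue by contradiction using the discreteness of $G$ together with a compactness argument. Suppose the statement fails for some $L>0$. Then there is a sequence of pairs $(x_n,y_n)$ of hyperbolic elements of $G$ with $\tau_{x_n}\le L$, $\tau_{y_n}\le L$, such that $A_{x_n}$ and $A_{y_n}$ are $(1/n)$-close but $A_{x_n}\ne A_{y_n}$. Using the fact that $\clo$-closeness refers to a pair of points $P_n\in A_{x_n}$, $Q_n\in A_{y_n}$ with $d(P_n,Q_n)<1/n$ and matching angles, I would first normalize: pick an auxiliary basepoint $O\in\H$ and act by $G$ (which does not change axes as geometric objects up to the $G$-action, and discreteness is $G$-invariant) — or better, since we want to land in a fixed compact set, translate each pair by an isometry in $G$ so that $P_n$ lies in a fixed compact fundamental-domain-sized region. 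Actually the cleanest route: act by elements $g_n\in G$ so that $g_nP_n$ lies in a fixed compact set $K\subset\H$ (possible since $\H/G$ — or at least the thick part through which we can push $P_n$ — has the projection of $K$ covering enough; if $G$ is cocompact this is immediate, in general one uses that $P_n$ can be assumed to lie on a closed geodesic hence in the convex core, but more simply one just translates $P_n$ into a Dirichlet domain's closure which is not compact in the non-cocompact case — so instead I would keep $P_n$ arbitrary and extract convergence of the axes directly).

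Here is the cleaner formulation I would actually carry out. Replacing $x_n$ by a conjugate (which replaces $A_{x_n}$ by its translate and $x_n$ by a conjugate in $G$, so still in $G$), we may assume $P_n$ stays in a fixed compact set $K$: indeed a hyperbolic $x_n$ with $\tau_{x_n}\le L$ has its axis projecting to a closed geodesic of length $\le L$ on $\H/G$, and such geodesics, if they exist at all, meet a fixed compact subset of the convex core; translating by $G$ we move $P_n$ into $K$ and carry $A_{y_n}$, $Q_n$ along (with $d(P_n,Q_n)<1/n$, so $Q_n$ lies in a fixed compact neighbourhood $K'$ of $K$). Now $x_n$ is a hyperbolic isometry whose axis meets $K$ and whose translation length is $\le L$; the set of such isometries is a bounded subset of $\isom$ in the compact-open topology. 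Since $G$ is discrete, only finitely many elements of $G$ lie in this bounded set. Hence after passing to a subsequence $x_n=x$ is constant, so $A_{x_n}=A_x$ is a fixed geodesic. The same argument applied to $y_n$ (whose axis meets $K'$, translation length $\le L$) gives, after a further subsequence, $y_n=y$ constant, so $A_{y_n}=A_y$ fixed. But then "$A_{x_n}$ and $A_{y_n}$ are $(1/n)$-close" for all $n$ forces the fixed geodesics $A_x$ and $A_y$ to be $\varepsilon$-close for every $\varepsilon>0$, i.e. to pass through arbitrarily close points with arbitrarily close angles; two complete geodesics in $\H$ that are $\varepsilon$-close for all $\varepsilon>0$ must coincide (a point and an incident direction determine a geodesic, and the relation $\inf_{P,Q}(d(P,Q)+|\text{angle difference}|)=0$ between two fixed geodesics is attained by compactness of the relevant parameter ranges and forces equality, or if the infimum is approached at infinity the two geodesics share an endpoint and then the angle condition forces them equal). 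This contradicts $A_{x_n}\ne A_{y_n}$, proving the lemma.

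The main obstacle, and the point requiring care, is the normalization step: justifying that after translating by $G$ we may assume the point $P_n$ (equivalently the axis $A_{x_n}$) lies in a fixed compact region, so that discreteness gives finiteness. This is transparent when $G$ is cocompact but needs a short argument in general — one uses that a hyperbolic element with bounded translation length has an axis that descends to a closed geodesic of bounded length, and all closed geodesics of length $\le L$ lie in a fixed compact subset of $\H/G$ (the $\le L$-thick-enough part, or the compact core), so a $G$-translate puts $A_{x_n}$ through a fixed compact $K\subset\H$. Once that is in hand the rest is the standard discreteness-plus-compactness packaging, and the final "two geodesics that are $\varepsilon$-close for all $\varepsilon$ coincide" step is elementary planar hyperbolic geometry. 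I would also remark that this is exactly the mechanism behind the informal statement in the introduction that "geodesics are quantitatively separated": non-coincident closed geodesics, after lifting, cannot be uniformly close.
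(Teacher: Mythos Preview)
Your proof is correct and takes a somewhat different route from the paper's. Both arguments begin the same way: assume failure, take sequences $(x_n,y_n)$ with $A_{x_n}$ and $A_{y_n}$ $(1/n)$-close but distinct, and conjugate by elements of $G$ so that the witness points $P_n$ lie in a fixed compact set. The paper justifies this normalization exactly as you outline, via Greenberg's result that axes of hyperbolic elements meet a $G$-cocompact subset $\Lambda^*$ of the convex hull of the limit set, and then translates along the axis by powers of $x_n$ so that $P_n$ itself lands in a compact $L$-neighbourhood $C'$.

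Where the arguments diverge is in how discreteness is used. You note that the isometries with axis through a compact $K$ and translation length at most $L$ form a precompact subset of $\isom$, so only finitely many lie in the discrete group $G$; hence after a subsequence $x_n\equiv x$ and $y_n\equiv y$ are constant elements of $G$, and two \emph{fixed} geodesics that are $(1/n)$-close at points of $K$ for all $n$ must coincide. The paper instead extracts subsequences converging in $\isom$ to hyperbolic limits $x,y$ (not assumed to lie in $G$), observes that these limits share an axis so that the commutators $[x_n,y_n]\in G$ converge to the identity, and only then invokes discreteness to force $[x_n,y_n]=1$, whence $A_{x_n}=A_{y_n}$. Your version is more direct and avoids the commutator device; the paper's buys a clean algebraic endgame (commuting hyperbolics share an axis) at the cost of the auxiliary Claim~1. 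One small correction to your write-up: the parenthetical assertion that two geodesics in $\H$ which are $\varepsilon$-close for every $\varepsilon$ must coincide is false as stated---two asymptotic geodesics sharing a single ideal endpoint are $\varepsilon$-close for every $\varepsilon$ near that end, with angle difference tending to zero, yet are distinct. Your argument is unaffected, because the points $P_n$ remain in the compact set $K$ and hence subconverge to an honest point of $\H$ lying on both axes with matching tangent direction; simply replace the parenthetical by that observation.
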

\begin{proof} 

Denote by $\Lambda$ the hyperbolic convex hull of the limit set of $G$. (Recall  the limit set of $G$ is the set of accumulation points of any $G$-orbit in $\H$.) 
Since $G$ is finitely generated, by  \cite[Lemma  1.3.1 and Theorem 1.3.2]{Green}, there exist a subset $\Lambda^*$ of $\Lambda$, invariant under $G$ such that the quotient of $\Lambda^*$ by $G$ is compact and the axis of every hyperbolic transformation in $G$  intersects $\Lambda^*$. Thus,  there exist a compact, convex  subset $C$ of $\H$ such that $\Lambda^* \subset G.C$.

Fix a positive number $L$ and denote by $C'$ the closure of the $(L+1)$-neighborhood of $C$.

\textbf{Claim 1:} Given $\varepsilon>0$ there exist $\delta>0$ such that if $x$ and $y$ are hyperbolic transformations whose axes are $\delta$-close and whose transformation lengths are bounded above by $L$ then $d(R, [x,y]R)<\varepsilon$ for all $R \in C'$.

We argue by contradiction: Suppose that there exist $\varepsilon>0$ and  two sequences $\{x_n\}$ and $\{y_n\}$ of hyperbolic transformations with translation length bounded above by $L$ and such that for each $n$, $x_n$ and $y_n$ are $1/n$-close,  $A_{x_n} \ne A_{y_n}$  and there exists a point $R_n \in C'$ that satisfies $d(R_n,[x_n,y_n]R_n)>\varepsilon$.

 \textbf{Claim 2:} For each $n$, we can assume that the points $P_n$ and $Q_n$  in $A_{x_n}$ and $A_{y_n}$ realizing Definition~\ref{delta} are in $C'$.

 Indeed, denote by $P_n'$ and $Q_n'$  the points in $A_{x_n}$ and $A_{y_n}$ realizing  Definition~\ref{delta}.

The axis $A_{x_n}$ projects to a closed geodesic $a_n$ in  $\H/G$. Since the translation length of $x_n$ is bounded by above by $L$, so  is the length of $a_n$. On the other hand,   $A_{x_n}$ intersects $G.C$. Hence, the projection of $P_n'$ to $\H/G$ is at distance at most $L$ from the projection of $G.C$.  Thus there is an element $g \in G$ such that $gP_n'$ is at distance at most $L$ of $C$. Since $Q_n'$ is close to $P_n'$, $Q_n'$ is also in $C'$. The proof of Claim 2 is completed by replacing the sequences $\{x_n\}$ and $\{y_n\}$ by the sequences $\{gx_ng^{-1}\}$ and $\{gy_ng^{-1}\}$. 

\textbf{Claim 3:} The sequences $\{x_n\}$ and $\{y_n\}$ have subsequences converging to hyperbolic transformations $x$ and $y$ respectively. 

Consider the sequences $\{T_n\}$ and $\{S_n\}$ of endpoints of $\{A_{x_n}\}$  in the circle at infinity in the negative and positive directions respectively. Since the circle is compact, by taking subsequences, we can assume that  $\{T_n\}$ and $\{S_n\}$ converge to $T$ and $S$ respectively. Since each $A_{x_n}$ intersects the compact set $C'$, $T \ne S$. Analogously, the sequence  $\{\taxx{x_n}\}$ of translation lengths is bounded by above by $L$. Therefore, it has a convergent subsequence. Thus,  Claim  3 follows. 

Since $A_{x_n}$ and $A_{y_n}$ are $1/n$-close, $A_x=A_y$.  Hence, $[x,y]P=P$ for all $P \in \H$. On the other hand, by taking a convergent subsequence of $\{R_n\}$, we see that $d(R,[x,y]R)\ge \varepsilon$
for some $R \in C'$.  This contradiction completes the proof of Claim 1.

To finish  the proof of the lemma, observe that since $G$ is discrete, there exists an open subset $U$ of isometries of $\H$ such that the identity is the only element of $G$ in $U$. 
Let
$$V_\eta
=\{ g \in PSL(2,\R) , d(R,gR)<\eta \mbox{ for all $R$ in $C'$}\}.$$
There exists $\varepsilon>0$ such that $V_\varepsilon \subset U$.   On the other hand, by Claim 1, there exists $\delta>0$ such that if the axes of $x$ and $y$ are $\delta$-close, then $[x,y] \in V_\varepsilon$.  Thus, the bracket $[x,y]$ equals the identity, which implies $A_x=A_y$.
\end{proof}

\begin{corollary}\label{smallint} For each $L>0$ and each $C>0$ there exists a constant $M>0$ such that for every pair of hyperbolic elements  
 $x$ and $y$ in $G$ with different axes and  such that $\tax <L$ and $\tay<L$,  the set 
$A_x \cap N_C(A_y)$
is a (possibly empty) geodesic segment of length at most $M$.
\end{corollary}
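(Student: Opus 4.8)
The plan is to deduce this from Lemma \ref{diverg} by a compactness/contradiction argument, exploiting the fact that a long intersection of $A_x$ with the $C$-neighborhood of $A_y$ forces the two axes to be $\clo$-close for the $\clo$ produced by Lemma \ref{diverg}. First I would observe the elementary hyperbolic-geometry fact that $A_x \cap N_C(A_y)$, being the intersection of a geodesic with a convex set (the $C$-neighborhood of a geodesic is convex in $\H$), is either empty or a single geodesic segment; so the only content is the length bound $M$. Suppose no such $M$ exists: then there are sequences $x_n, y_n$ of hyperbolic elements of $G$ with $\tau_{x_n}, \tau_{y_n} < L$, with $A_{x_n} \ne A_{y_n}$, and with $A_{x_n} \cap N_C(A_{y_n})$ a segment of length $\ge n$.

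Next I would run a normalization step as in Lemma \ref{diverg}: using that finitely generated $G$ acts cocompactly on the set $\Lambda^*$ (from \cite{Green}) meeting every axis, replace $x_n, y_n$ by conjugates $g_n x_n g_n^{-1}, g_n y_n g_n^{-1}$ so that the long segment $A_{x_n} \cap N_C(A_{y_n})$ meets a fixed compact set $C$; since the segment has length $\ge n \to \infty$, after translating we may further arrange that the midpoint of this segment stays in a fixed compact set. Then, exactly as in the proof of Claim 3 of Lemma \ref{diverg}, the endpoints at infinity of $A_{x_n}$ and the translation lengths $\tau_{x_n}$ subconverge (the axes meet a fixed compact set, so the two endpoints stay apart), and likewise for $y_n$; so along a subsequence $x_n \to x$ and $y_n \to y$ with $x, y$ hyperbolic. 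Because the overlap segments have length tending to infinity while their midpoints stay bounded, the limiting axes $A_x$ and $A_y$ satisfy $A_x \subset N_C(A_y)$ along a full geodesic ray in both directions, which forces $A_x = A_y$ (two distinct geodesics in $\H$ diverge, so cannot stay within bounded distance $C$ on an unbounded set).

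Now I would extract the contradiction with discreteness, mimicking the endgame of Lemma \ref{diverg}. Since $A_x = A_y$, the commutator $[x,y]$ fixes $\H$ pointwise, i.e.\ $[x,y] = 1$; but for large $n$, $x_n$ and $y_n$ are close to $x$ and $y$ in $\isom$, so $[x_n, y_n]$ lies in any prescribed neighborhood $V_\varepsilon$ of the identity, with $\varepsilon$ chosen (using discreteness of $G$, as in Lemma \ref{diverg}) so that $V_\varepsilon \cap G = \{1\}$. Hence $[x_n, y_n] = 1$ for large $n$, which (since $x_n, y_n$ are hyperbolic, a commuting pair of hyperbolics shares an axis) gives $A_{x_n} = A_{y_n}$, contradicting the hypothesis. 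The main obstacle is the normalization step: one must be careful to translate so that not merely \emph{some} point but the \emph{midpoint} of the growing overlap segment stays in a fixed compact set, so that in the limit one genuinely gets two geodesics staying within distance $C$ on an unbounded set in \emph{both} directions — which is what is needed to conclude $A_x = A_y$ rather than merely a one-sided asymptotic relation. Alternatively, and perhaps more cleanly, one can avoid the midpoint issue by noting that it suffices to make the overlap long on one side and use that $A_{x_n}$ meets the fixed compact set; I would use whichever bookkeeping is shortest.
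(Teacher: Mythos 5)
Your main argument is correct, but it takes a genuinely different route from the paper. The paper deduces the corollary directly from the \emph{statement} of Lemma \ref{diverg}: since the axes are distinct they are not $\clo$-close, so either they cross at angle bounded below by $\clo$, in which case the hyperbolic rule of sines gives $\sinh(N/2)\le \sinh(C)/\sin(\clo)$, or they are disjoint at distance at least $\clo$, in which case a perpendicular--quadrilateral formula (\cite[Theorem 7.17.1(i)]{B}) gives $\sinh(N/2)\le 1/\sinh(\clo)$; this yields an explicit bound $M=M(C,\clo)$ with no limiting argument. You instead rerun the compactness-plus-discreteness scheme of the \emph{proof} of Lemma \ref{diverg}: conjugate so that the midpoint of the long overlap lies in a fixed compact set (this is exactly the trick of Claim 2, translating along the axis by powers of $x_n$), extract limits, conclude the limit axes coincide, and then use a neighborhood of the identity meeting $G$ only in $1$ to force $[x_n,y_n]=1$ and hence equal axes. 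That is a valid proof, and your observation that $N_C(A_y)$ is convex cleanly justifies the ``single geodesic segment'' part of the statement, which the paper leaves implicit; the trade-off is that your argument is non-quantitative (no explicit $M$) and duplicates machinery the paper has already packaged into Lemma \ref{diverg}, whereas the paper's proof is two lines of hyperbolic trigonometry once that lemma is in hand.

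One caveat: the ``cleaner alternative'' you float at the end --- arranging the overlap to be long on one side only --- does not work. One-sided closeness in the limit yields only that $A_x$ and $A_y$ are asymptotic (sharing a single endpoint at infinity); since the limits $x,y$ need not belong to $G$, you cannot invoke the fact that fixed-point sets of elements of a discrete group are disjoint or coincide, and for hyperbolic isometries with asymptotic but distinct axes the commutator is a nontrivial parabolic, so the discreteness endgame produces no contradiction. The two-sided (midpoint) bookkeeping you describe first is therefore essential; keep it.
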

\begin{proof} 
Let $\clo$ be is as in  Lemma \ref{diverg} for $L$ and $G$ and let $N$ be the length of the (possibly empty) segment $A_x \cap N_C(A_y)$. 

If $A_x$ and $A_y$ intersect at an angle $\theta$, then by Lemma \ref{diverg}, $\sin(\theta) \ge \sin(\delta)$.
 By the  Rule of Sines, $\sinh(N/2)\le   \sinh(C)/\sin(\clo)$ (see Figure \ref{m}(a).) Then $N$ is bounded  above by a constant depending on $C$ and $\delta$. 

If $A_x$ and $A_y$ are parallel, by Lemma \ref{diverg} they are at distance at least $\clo$. Since the distance between $A_x$ and $A_y$  is realized, there is a quadrilateral  as in Figure \ref{m}(b), with all angles except $\theta$ being right angles, $A \ge \clo$ and $B \le C$.

\begin{figure}[htbp]
\begin{center}
\begin{pspicture}(14,3)%

\psline{->}(0.25,0.5)(5,0.5)\rput(0.2,0.83){$A_y$}
\psline{->}(0,0)(5,3)\rput(0.57,0){$A_x$}

\psline(4,0)(4,3)
\rput(3.2,1.45){$\le N/2$}
\rput(4.5,1){$\le C$}
\rput(1.7539,0.775){$\theta$}

\rput(2.5,-0.5){$(a)$}

\rput(8,0){

\psline{->}(0,0)(8.5,0)\rput(2,0.3){$A_x$}

\psline(4,0)(4,3) \rput(3.7,1){A}
\psline(7,0)(7,2.9) \rput(6.7,1.012){B}
\psarc(3.5,6.5){5}{230}{322}\rput(2,2){$A_y$}
\rput(5.5, 0.3){$N/2$}
\rput(6.8,2.4){$\theta$}
\rput(5.5,2.23){X}
}

\rput(12,-0.5){$(b)$}
 \end{pspicture}

\end{center}
\caption{Proof of Lemma \ref{smallint}}\label{m}
\end{figure}
By \cite[Theorem 7.17.1(i)]{B},  $\sinh(N/2)=\cos(\theta)/\sinh(A) \le 1/\sinh(\clo)$ (see Figure~\ref{m}(b)). This implies that $\cosh(N/2)$ is bounded by above by a bound  depending on $\clo$. An elementary computation gives the desired result.
\end{proof}

\section{The Non-cancellation Lemma}\label{ncl}

Let $K$ be a real positive number. A piecewise-smooth embedding $\tg$ of $\R$ in the hyperbolic plane is a \emph{$K$-quasi-geodesic} if for any pair of points $P$ and $Q$ in $\tg$, the length of the path in $\tg$ joining $P$ and $Q$ is at most $K \cdot d(P,Q)$.

Fix a pair of hyperbolic elements $x$ and $y$ in $G$ whose edges intersect at a point $P$.  We will describe the construction of a piecewise-smooth embedding $\tg$ of $\R$ (depending on $x$ and $y$) and   show it is a quasi-geodesic. 

Let $\map{\alpha}{[0,1]}{\H}$ be the curve  from $\alpha(0)=y^{-1}P$ to $\alpha(1)=x P$, whose image is given by
the concatenation of the geodesic segment  of $A_y$ from  $y^{-1}P$ to $P$ with the geodesic segment of $A_x$ from $P$ to $x P$. 
 Since $x y(\alpha(0))=\alpha(1)$,  $\alpha$ can be extended by periodicity to a map $\tg(x,y):\R\to\H\thinspace$ such that   $\tg(x,y)(t)=\alpha(t)$ for $t\in [0,1]$ and $\tg(x,y)(t+1)=x y\tg(x,y)(t)$ for all $t$.

The map $\tg(x,y)$ is a piecewise geodesic curve consisting of segments of length $\tax$ (included in the axes of  conjugates of $A_x$ by some power  of $xy$) alternating with segments of length $\tay$ (included in the axes of  conjugates of $A_y$ by some power  of $xy$.)

We remark that we will be using more than just that $\tg(x,y)$ is a quasi-geodesic, but also its geometric nature. Indeed purely abstract results about quasi-geodesics suffice to prove a weaker version of our result, where we need to assume that both $p$ and $q$ are large.

\begin{lemma}\label{pq} For each $L>0$ 
 there exists a constant $K>0$ depending on $G$ such that  if $x$ and $y$ are hyperbolic transformations in $G$ whose axes are distinct and intersect, and whose translation lengths are bounded   above by $L$  then for each pair of positive integers $p$ and $q$, the curve $\tg(x^p,y^q)$ is a $K$-quasi-geodesic. Moreover, the oriented angles between any pair of consecutive maximal segments of $\tg(x^p,y^q)$ are congruent.
\end{lemma}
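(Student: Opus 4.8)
The plan is to show that the piecewise-geodesic path $\tg(x^p, y^q)$ satisfies the uniform quasi-geodesic estimate by exploiting the fact that consecutive segments meet at a bounded angle that is uniformly bounded away from $\pi$ (i.e. the path "turns" by a definite amount at each vertex), and that each segment has length at most $pL$ or $qL$ which, crucially, is bounded below — in fact this is the wrong direction. The right approach is the standard one: a piecewise-geodesic path whose segments are long enough relative to the size of the turning angle, or whose turning angles are bounded away from $0$ and $\pi$ uniformly, is a quasi-geodesic. So first I would invoke Corollary~\ref{smallint} (with $C$ chosen appropriately, e.g. any fixed positive constant, say $C = 1$) together with Lemma~\ref{diverg}: since $A_{x^p} = A_x$ and $A_{y^q} = A_y$ have translation lengths $\tax, \tay \le L$ by hypothesis, and they are distinct, there is a uniform lower bound $\delta = \delta(L, G) > 0$ on the angle at which $A_x$ and $A_y$ cross at $P$. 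The key geometric observation is that the exterior turning angle of $\tg(x^p,y^q)$ at every vertex equals (up to sign, with the orientations matching as in Remark~\ref{product}) this single crossing angle of $A_x$ and $A_y$, because every segment of $\tg(x^p,y^q)$ lies on a translate of $A_x$ or $A_y$ by a power of $x^p y^q$, and translates by an isometry preserve the crossing angle; the vertices alternate between $A_x$-to-$A_y$ crossings and $A_y$-to-$A_x$ crossings of translated pairs, all isometric to the crossing at $P$. This establishes the "moreover" clause.

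Next I would turn the bounded-turning property into the quasi-geodesic estimate. Here I expect the main obstacle: the segments of $\tg(x^p,y^q)$ have lengths $p\tax$ and $q\tay$, which can be small only when $p=1$ or $q=1$ and $\tax$ (resp. $\tay$) is small. So a naive "long segments + bounded turning" argument does not directly apply. The resolution is that the relevant scale is not the segment length in absolute terms but the ratio of segment length to the injectivity-type scale of the group, and more to the point one should group a short $A_x$-segment together with its neighboring $A_y$-segments into a single "macro-segment." Concretely: the fundamental period of $\tg(x^p,y^q)$ is the broken geodesic from $y^{-q}P$ to $P$ to $x^p P$, and $\tg(x^p,y^q)(t+1) = x^p y^q \cdot \tg(x^p,y^q)(t)$, so $\tg(x^p,y^q)$ is invariant under the infinite cyclic group generated by the hyperbolic isometry $x^p y^q$ (hyperbolic by Remark~\ref{product}, since $A_x$ and $A_y$ cross), acting by translation along its axis. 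A curve invariant under a hyperbolic cyclic group and contained in a bounded neighborhood of that axis is automatically a quasi-geodesic with constant depending only on the diameter of (one fundamental domain of the curve) modulo (the translation length of $x^p y^q$).

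Therefore the core estimate I would prove is: there is a constant $N = N(L,G)$ such that $\tg(x^p,y^q)$ is contained in the $N$-neighborhood of the axis $A_{x^p y^q}$, with $N$ independent of $p,q$. This is where I would do the real work. One fundamental domain of $\tg(x^p,y^q)$ for the $\langle x^p y^q\rangle$-action is the broken path $y^{-q}P \to P \to x^p P$; I would bound the distance from each of its points to $A_{x^p y^q}$. Using Remark~\ref{product} applied to the pair $(x^p, y^q)$: the axis $A_{x^p y^q}$ passes through the point $Q$ on $A_x$ at distance $p\tax/2$ from $P$ and the point $R$ on $A_y$ at distance $q\tay/2$ from $P$ on the other side, so the broken path's vertices $P$, $x^pP$, $y^{-q}P$ are all within a uniformly bounded distance (depending on $L$ and the crossing angle $\delta$, via hyperbolic trigonometry in the triangle $PQR$ of Figure~\ref{m3} — note $d(P,Q), d(P,R) \le pL/2, qL/2$ but the foot of the perpendicular from $P$ to $A_{x^py^q}$ has bounded length because of the angle bound $\delta$; this is exactly the computation already packaged in Corollary~\ref{smallint}). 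Then convexity of the neighborhood of a geodesic gives that the whole broken path lies in $N_N(A_{x^p y^q})$ for uniform $N$. Combined with the lower bound $\tau_{x^p y^q} \ge $ (something controlled, via the triangle again — in fact one only needs that the curve, being $\langle x^p y^q\rangle$-periodic inside $N_N(A_{x^py^q})$, cannot double back), the standard lemma that a connected $H$-invariant subset of $N_N(\mathrm{axis}(H))$ for $H$ infinite cyclic hyperbolic is an $f(N)$-quasi-geodesic finishes the proof. I would state and use that standard lemma explicitly (it follows from the fact that nearest-point projection to a geodesic is distance-nonincreasing up to an additive $O(N)$ error, so the length of a subarc between $A$ and $B$ is comparable to the distance between their projections, which is comparable to $d(A,B)$ once $d(A,B) \gtrsim N$; for $d(A,B) \lesssim N$ one uses that a fundamental domain has bounded length $\le pL + qL \le$ ... — here one does finally need care, and the honest fix is that the ratio (fundamental domain length)/(translation length of $x^py^q$) is bounded, which again reduces to hyperbolic trigonometry in triangle $PQR$).
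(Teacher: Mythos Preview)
Your argument is essentially correct but follows a genuinely different route from the paper. You first bound the distance from the broken path to the axis $A_{x^py^q}$ (this is the content of the paper's Lemma~\ref{pq1}, proved there by an area comparison in the sector at $P$, not via Corollary~\ref{smallint}, which does not directly give the perpendicular bound you invoke), and then use periodicity under $g=x^py^q$ together with a uniform bound on the ratio $(p\tax+q\tay)/\tau_{g}$ to extract the multiplicative quasi-geodesic constant. That ratio bound is correct (hyperbolic law of cosines in the triangle $PQR$ with the angle at $P$ bounded away from $0$ and $\pi$ by Lemma~\ref{diverg}), though you would still need a short separate argument for pairs of points within one or two periods.

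The paper instead argues by convexity. After establishing the angle congruence (as you do), it observes that the triangles $T=\triangle(y^{-q}P,P,x^pP)$ and $T'=\triangle(P,x^pP,gP)$ are congruent, and that their translates $g^k T,\,g^k T'$ tile an infinite polygonal strip $\Omega$ whose boundary contains $\tg(x^p,y^q)$. Every interior angle of $\Omega$ is $\theta_1$, $\theta_2+\theta_3$, or similar, hence at most $\pi$ because $\theta_1+\theta_2+\theta_3\le\pi$ in a hyperbolic triangle; so $\Omega$ is convex and the geodesic between any two points of $\tg(x^p,y^q)$ stays inside it. The quasi-geodesic constant then falls out directly, depending only on the angle lower bound $\delta$ from Lemma~\ref{diverg}.

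The convexity approach is shorter and keeps Lemma~\ref{pq} logically prior to Lemma~\ref{pq1}; your approach effectively proves the two lemmas together in the opposite order, at the cost of more trigonometric bookkeeping but with the advantage of fitting the standard ``periodic curve in a tube is quasi-geodesic'' template.
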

\begin{proof} 
Fix $p$ and $q$ and repeat the construction of Remark \ref{product} for the hyperbolic isometries $x^p$ and $y^q$.
The transformation $x^p$ maps the angle determined by $y^{-q}P, P, x^p(P)$ to the angle $x^py^{-q}P, x^pP, x^{2p}(P)$ (Figure \ref{qgeo1}). Thus, these two angles are congruent. The angle $x^py^{-q}P, x^pP, x^{2p}(P)$ is congruent to  the angle $P, x^{p}P,x^{p}y^{q}(P)$ because they are opposite at the intersection of $A_{x}$ and $x^{p}y^{q}(A_{y})=A_{x^{p}y x^{-p}}$. This implies that the angles determined by $y^{-q}P, P, x^p(P)$ and by $P, x^p(P), y^q x^p(P)$ are congruent.  Therefore the angles formed by the consecutive maximal segments of  $\tg(x^p,y^g) $  (labeled with $\theta_{1}$ in Figure \ref{qgeo1}) are all congruent.

\begin{figure}[htbp]
\begin{center}
\begin{pspicture}(0,1)(12,6)%

\rput(11.5,5){$\Omega$}
\psline[linestyle=dotted,arrowsize=0.2]{->}(11.5,4.975)(6,4.5)
\psline[linestyle=dotted,arrowsize=0.2]{->}(11.5,4.975)(6.6,3.5)
\psline[linestyle=dotted,arrowsize=0.2]{->}(11.5,4.975)(4.6,3.5)
\psline[linestyle=dotted,arrowsize=0.2]{->}(11.5,4.975)(3,2)
\psline[linestyle=dotted,arrowsize=0.2]{->}(11.5,4.975)(6,2)
\pspolygon[fillstyle=vlines,hatchwidth=0.01pt, fillcolor=lightgray,linecolor=white](2,1)(6,1)(3,3)
\pspolygon[fillstyle=hlines,hatchwidth=0.1pt,linecolor=white](3,3)(6,1)(7,3)
\pspolygon[fillstyle=vlines,hatchwidth=0.1pt, fillcolor=lightgray,linecolor=white](3,3)(7,3)(4,5)
\pspolygon[fillstyle=hlines,hatchwidth=0.1pt,linecolor=white](4,5)(8,5)(7,3)

\psline[linestyle=dashed]{->}(1,1)(10,1) \rput(9,1.2){$A_y$}
\psline[linestyle=dashed]{->}(7,0.3)(0.1,5)\rput(7,0.6){$A_x$} 

\psline[linestyle=dashed](0,3)(10,3)\rput(10,3.2){$A_{x^{p}y^q x^{-p}}$}

\psline[linewidth=0.05]{>-}(2,1)(6,1)
\psline[linewidth=0.05](6,1)(3,3)

\psline[linewidth=0.05](3,3)(7,3)
\psline[linewidth=0.05](7,3)(4,5)

\psline[linewidth=0.05](4,5)(8,5)

\psdot(0.3,3) \rput(0.6,3.4){$x^py^{-q}(P)$}

\psdot(0.3,4.9)\rput(1,5){$x^{2p}(P)$}


\rput(5,1.3){$\theta_1$}
\rput(4,2.7){$\theta_1$}

\rput(6.1,3.3){$\theta_1$}
\rput(2.3,3.25){$\theta_1$}


\rput(2.5,1.3){$\theta_2$}
\rput(3.5,3.3){$\theta_2$}

\rput(6.52,2.6){$\theta_2$}

\rput(7.52,4.6){$\theta_2$}

\rput(3.1,2.5){$\theta_3$}
\rput(4.1,4.5){$\theta_3$}

\rput(5.85,1.4){$\theta_3$}

\rput(1.5,0.7){$y^{-q}(P)$}
\rput(5.9,0.7){$P$}
\rput(2.4,2.7){$x^p(P)$}
\rput(8,3.2){$g(P)$}
\rput(3.4,5.4){$x^py^{q}x^p(P)$}

\rput(9,5.){$(g)^2(P)$}

\rput(5.9,5.63){$A_{g}$}
\psline[linecolor=gray,linewidth=2pt](3.8,0.6)(6.6,5.8)

\psdot(4,1) \rput(4,0.5){$R$}
\psdot(4.5,2) \rput(5,2.2){$Q$}

 \end{pspicture}

\end{center}
\caption{Quasigeodesic associated to $x$, $y$, $p$ and $q$}\label{qgeo1}
\end{figure}
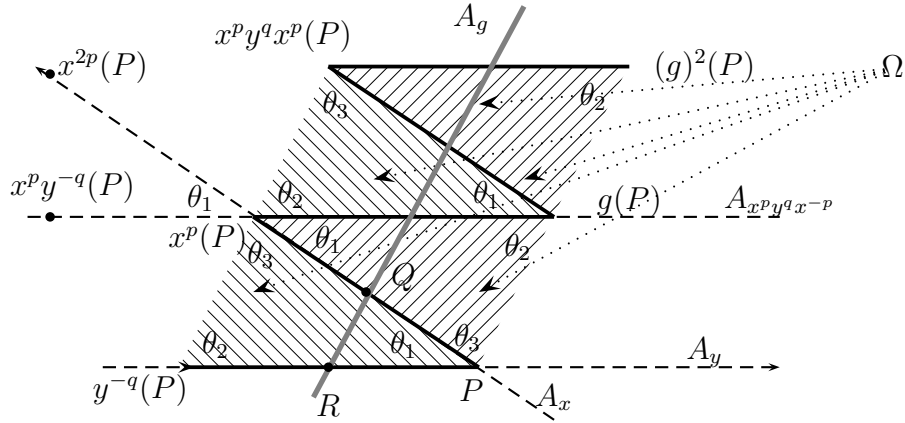

Denote by  $T$ the triangle with vertices, $y^{-q}P, P, x^p(P)$ and by $T'$ the triangle vertices $P, x^p(P),g(P)$ (Figure \ref{qgeo1}). Since $T$ and $T'$ have an angle and the two adjacent sides to the angle congruent, they are congruent.

Set $g=x^py^q$. Since $A_g$ is invariant under $g$, $A_g$ crosses the middle of  the band $\cup_{k \in \Z} g^{k}(T \cup T')$.

To prove that  $\tg(x^p,y^q)$ is a quasi-geodesic, observe  that triangles 
$$g^s(T), g^s(T'),g^{s+1}(T), g^{s+1}(T'),\dots , g(T), g(T'),\dots,g^{r}(T), g^{r}(T')$$ form a polygon $\Omega$. On the other hand, since the angles   $\theta_1, \theta_2$ and $\theta_3$ (see Figure \ref{qgeo1}) are the interior angles of a triangle, they add up to at most $\pi$. This implies that the polygon $\Omega$ is convex. 
Therefore, the geodesic between two points in the curve $\tg$ is in the interior of  $\Omega$. By elementary hyperbolic geometry, there exists a positive constant $K$ such that $\tg$ is a $K$-quasi-geodesic, (Note that $K$ can be taken so that it depends only on the lower bound of the angle between intersecting elements of axes of hyperbolic elements in $G$ given by Lemma \ref{diverg}.)

\end{proof}

\begin{lemma}\label{pq1} Let $L>0$ and let   $K>0$ be the constant of Lemma \ref{pq}. Then there exists a constant $C>0$ depending on $G$ such that   if $x$ and $y$ are  hyperbolic transformations  in $G$ whose axes are distinct and intersect, and whose translation lengths are bounded  above by $L$ then for each pair of positive integers $p$ and $q$, the $K$-quasigeodesic $\tg(x^p,y^q)$ satisfies  $\tg(x^p,y^g) \subset N_{C/2}(A_{g})$ and  $A_{g} \subset N_{C/2}(\tg(x^p,y^q))$.
\end{lemma}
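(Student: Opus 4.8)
The plan is to use the fact, established in Lemma~\ref{pq}, that $\tg(x^p,y^q)$ is a convex polygonal path whose consecutive segments meet at a constant angle $\theta$, together with the two-way bounded-distance (``fellow-traveller'') property between quasi-geodesics and the genuine geodesic $A_g$ that they both approximate. Set $g=x^py^q$; by Remark~\ref{product} applied to $x^p,y^q$ the axis $A_g$ is a well-defined geodesic, invariant under $g$, and $\tg(x^p,y^q)$ is also invariant under $g$ (by construction $\tg(x,y)(t+1)=xy\,\tg(x,y)(t)$, so after rescaling the same holds with $g$). Hence both sets are cocompact under the cyclic group $\langle g\rangle$: a single ``period'' of each is compact, and the Hausdorff distance between the two periodic sets is therefore finite and realized. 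So the only real content is to bound this distance \emph{uniformly} in $p,q$, i.e.\ in terms of $L$ and $G$ alone.

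First I would record that by Lemma~\ref{pq} the quasi-geodesic constant $K$ depends only on the angle lower bound $\delta=\delta(L,G)$ from Lemma~\ref{diverg}, hence only on $L$ and $G$; this is the crucial uniformity input. Next, I would invoke the stability of quasi-geodesics in the hyperbolic plane (the Morse lemma in its $\delta$-hyperbolic or its elementary $\H$ form): a biinfinite $K$-quasi-geodesic in $\H$ lies within a bounded Hausdorff distance $R=R(K)$ of the unique geodesic with the same endpoints at infinity. The endpoints at infinity of $\tg(x^p,y^q)$ are exactly the fixed points of $g$ — because $g$ translates $\tg$ along itself, the forward and backward limits of $\tg$ are $g$-fixed — so that geodesic is precisely $A_g$. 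Therefore $\tg(x^p,y^q)\subset N_R(A_g)$ and $A_g\subset N_R(\tg(x^p,y^q))$ with $R$ depending only on $K$, hence only on $L$ and $G$. Setting $C=2R$ gives the statement. I would include, for the reader who prefers to avoid black-box hyperbolicity, the one-line alternative that convexity of the polygon $\Omega$ from Lemma~\ref{pq} lets one estimate $R$ directly: the distance from a vertex of $\Omega$ to $A_g$ is controlled because $A_g$ passes through the ``middle'' of the band $\bigcup_k g^k(T\cup T')$ and the triangles $T,T'$ have one side of length $\le L$ and angles bounded below by $\delta$, so they have bounded diameter, giving an explicit $R=R(L,\delta)$.

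The main obstacle — and the only place where care is genuinely needed — is the uniformity of the constant: a priori the quasi-geodesic $\tg(x^p,y^q)$ has segments of lengths $p\,\tau_x$ and $q\,\tau_y$ growing with $p,q$, so one must be sure the quasi-geodesic constant, and hence the Morse constant $R$, does \emph{not} degrade. This is handled precisely by the remark in the proof of Lemma~\ref{pq} that the polygon $\Omega$ is convex for every $p,q$ (the three angles $\theta_1,\theta_2,\theta_3$ of a hyperbolic triangle sum to $<\pi$ regardless of side lengths), so $K$ is governed only by the pinching angle $\theta=\theta_1$, which by Lemma~\ref{diverg} is bounded below by $\delta(L,G)$. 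Once this point is isolated, the rest is an application of the Morse lemma plphilippe $\langle g\rangle$-cocompactness, and is routine. I would therefore structure the write-up as: (1) identify endpoints at infinity of $\tg(x^p,y^q)$ with the fixed points of $g$ and hence the associated geodesic with $A_g$; (2) cite Lemma~\ref{pq} for the uniform $K$; (3) apply stability of quasi-geodesics in $\H$ to get a uniform $R(K)$; (4) set $C=2R$ and conclude both inclusions.
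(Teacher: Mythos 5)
Your argument is correct, but it proves the lemma by a genuinely different route than the paper. You black-box the stability (Morse) lemma for quasi-geodesics in $\H$: since $\tg(x^p,y^q)$ is $g$-invariant for $g=x^py^q$, its two ends converge to the fixed points of $g$, so the geodesic it fellow-travels is exactly $A_g$, and the Hausdorff-distance bound $R(K)$ is uniform because $K$ from Lemma~\ref{pq} depends only on $L$ and $G$ (via the angle bound $\delta$ of Lemma~\ref{diverg}); note that with the paper's definition of $K$-quasi-geodesic, an arclength parametrization is a $(K,0)$-quasi-isometric embedding, so the stability theorem does apply, and Remark~\ref{product} guarantees $g$ is hyperbolic so that $A_g$ exists. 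The paper instead stays elementary and explicit: it bounds the distance $d[p,q]$ from the corner $P\in A_x\cap A_y$ to $A_g$ by comparing the area $2\theta_1\sinh^2(d[p,q]/2)$ of a circular sector with the area $\le \pi-\theta_1$ of the triangle $T$, using $\theta_1\ge\delta$, which yields $\tg(x^p,y^q)\subset N_{C_1}(A_g)$; the reverse inclusion comes from the $\ln(1+\sqrt2)$-thinness of the triangle $P,Q,R$ cut out by $A_x$, $A_y$, $A_g$, and then $C=2\max\{\ln(1+\sqrt2),C_1\}$. What your approach buys is brevity and generality (any uniform quasi-geodesic with the right endpoints would do); what the paper's buys is a self-contained proof with explicit constants, in keeping with its stated preference for elementary hyperbolic geometry — and your closing sketch (convexity of $\Omega$, $A_g$ running through the band $\bigcup_k g^k(T\cup T')$, bounded ``width'' of $T$, $T'$ from the angle bound) is essentially a compressed version of that direct argument, so either write-up is acceptable. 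One cosmetic remark: the stray token ``plphilippe'' should read ``plus''.
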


\begin{proof} 
Denote by $d[p,q]$ the distance between $P$ (the point in $A_x \cap A_y$) and $A_g$.  Consider the region $\Lambda$ bounded by the axes $A_x$ and $A_y$ and  the arc of the circle of center $P$ and radius $d[p,q]$.  The area of $\Lambda$ equals $2 \theta_1 \sinh^2(d[p,q]/2)$. Also, $\Lambda$ is included in the triangle $T$, of area bounded by above by $\pi - \theta_1$ (see Figure~\ref{arc}). Hence,
$$
2 \sinh^2(d[p,q]/2) \le (\pi - \theta_1)/\theta_1 \le \pi / \delta.$$
Therefore, there exists a constant $C_1>0$ such that $d[p,q] \le C_1$ for all positive integers $p$ and $q$. Observe (Figure \ref{qgeo1}) the distance between any point in  $\tg(x^p,y^g)$ and $A_g$ smaller than that $d[p,q]$. This implies $\tg(x^p,y^g) \subset N_{C_1}(A_g)$.

\begin{figure}[htbp]
\begin{center}
\begin{pspicture}(0,1)(12,3)%


\psarc[linewidth=2pt,linecolor=darkgray](6,1){2.2}{145}{180}


\pspolygon[](2,1)(6,1)(3,3)


%

\rput(5,1.3){$\theta_1$}

\rput(1.25,1){$y^{-q}(P)$}
\rput(5.9,0.7){$P$}
\rput(2.4,3){$x^p(P)$}
\rput(4.9,2.6){$A_{g}$}
\psline[linecolor=gray,linewidth=2pt](3.432,0.6)(4.325,3)
 \end{pspicture}

\end{center}
\caption{The region $\Lambda$}\label{arc}
\end{figure}
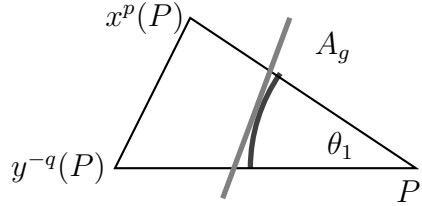

Denote by $R$ the intersection point of $A_{g}$ with $A_x$ and by $Q$ the intersection point of $A_{g}$  with $A_y$ (see Figure~\ref{qgeo1}).

Consider the triangle with vertices $P$, $Q$ and $R$. Triangles in the  hyperbolic plane $\H^2$ are $\ln(1+\sqrt{2})$-thin \cite[Fact 4, page 90]{hg}. In particular, the side of the triangle included in $A_{g}$  is at distance at most $\ln(1+\sqrt{2})$ of the union of the other two sides. 

By taking $C=2\max\{\ln(1+\sqrt{2}),C_1\}$  the desired result follows.

\end{proof}

\begin{lemma}\label{neigh} Let $x$ and $y$ be two hyperbolic transformations in $G$ whose axes intersect at a point $P$. Let $p$ and $q$ be positive integers such that $p\cdot \tax \ge 6KC$, where $K$ and $C$ are as in Lemmas \ref{pq} and \ref{pq1}. Denote by $I$ the segment of $A_x$ from $P$ to $x^p(P)$. 

Let $S$ and $R$ be the points in $A_x$ at distance $3KC$ of $P$ and $x^p P$ (Figure \ref{m6}.)

Let $s$ (resp. r) be the open half-plane bounded by the line  perpendicular to $A_x$ through $S$ (resp. $R$), containing the point $x^pP$ (resp. $P$). 

Set $U = s \cap r \cap N_C(I)$. 

Then $U$ contains an open subsegment $J$ of $I$ of length at least $p\cdot\tax-6KC$. Moreover, $U \subset N_C(I)$ and  $\mathrm{closure}(U) \cap N_{C}(L)=\emptyset$ for all maximal geodesic segments of $L$ of $\tg(x^p,y^g)$ distinct from $I$.
\end{lemma}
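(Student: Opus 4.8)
The plan is to dispose of the two easy assertions first and then prove the separation statement. The inclusion $U\subset N_C(I)$ is immediate from the definition $U=s\cap r\cap N_C(I)$. For the subsegment $J$, I would take $S$ and $R$ (as in Figure~\ref{m6}) to be the point of $A_x$ at distance $3KC$ from $P$ in the direction of $x^pP$ and the point at distance $3KC$ from $x^pP$ in the direction of $P$, respectively; since the length of $I$ is $p\cdot\tax\ge 6KC$, both $S$ and $R$ lie in $I$, and the open geodesic subsegment $J\subset A_x$ from $S$ to $R$ has length $p\cdot\tax-6KC$. Each point of $J$ lies on $I\subset N_C(I)$ (as $C>0$), lies strictly on the $x^pP$-side of the perpendicular to $A_x$ through $S$ (hence in $s$) and strictly on the $P$-side of the perpendicular through $R$ (hence in $r$); so $J\subset U$.

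For the separation assertion, the first step is a reduction via nearest-point projection. Writing $\pi(z)$ for the nearest-point projection of a point $z\in\H$ to the geodesic $A_x$, I would note that $z$ and $\pi(z)$ lie on the same side of every geodesic perpendicular to $A_x$ that misses $\pi(z)$, because two distinct geodesics both perpendicular to $A_x$ are disjoint (a hyperbolic triangle with two right angles is impossible). Consequently $\overline s\cap\overline r$ is exactly the set $\{z:\pi(z)\in[S,R]\}$, and since $[S,R]\subset I$ one has $d(z,I)=d(z,A_x)=d(z,\pi(z))$ for such $z$. As $\mathrm{closure}(U)\subset\overline s\cap\overline r\cap\overline{N_C(I)}$, every $z\in\mathrm{closure}(U)$ therefore satisfies $\pi(z)\in[S,R]$ and $d(z,\pi(z))\le C$.

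Then, given a maximal geodesic segment $L\ne I$ of $\tg:=\tg(x^p,y^q)$, I would invoke Lemma~\ref{pq}: $\tg$ is a $K$-quasigeodesic and a piecewise geodesic embedding whose maximal segments (alternating lengths $p\tax$ on translates of $A_x$ and $q\tay$ on translates of $A_y$) are pairwise distinct, so $I$ occurs only once and any subarc of $\tg$ running from an interior point $\pi\in[S,R]$ of $I$ to a point $w\in L$ must leave $I$ through one of the endpoints $P$, $x^pP$. Hence the $\tg$-length of that subarc is at least the distance along $A_x$ from $\pi$ to that endpoint, which is $\ge 3KC$ because $\pi\in[S,R]$; the quasigeodesic inequality then gives $d(\pi,w)\ge 3KC/K=3C$, so $d(\pi,L)\ge 3C$. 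For $z\in\mathrm{closure}(U)$ this yields $d(z,L)\ge d(\pi(z),L)-d(z,\pi(z))\ge 3C-C=2C$, which exceeds $C$; since every point of $N_C(L)$ lies within $C$ of $L$, we conclude $\mathrm{closure}(U)\cap N_C(L)=\emptyset$.

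The only delicate point I foresee is the hyperbolic-geometry description of $\overline s\cap\overline r$ in terms of nearest-point projection onto $A_x$, together with the open/closed bookkeeping needed to pass to $\mathrm{closure}(U)$; after that, everything is the quasigeodesic inequality applied with the hypothesis $p\tax\ge 6KC$, and I expect it to be routine. I would also remark that this argument uses only Lemma~\ref{pq}; Lemma~\ref{pq1} enters merely to pin down the value of the constant $C$ that is reused here.
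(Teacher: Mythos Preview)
Your argument is correct and is essentially the paper's own: both pass from a point of $\mathrm{closure}(U)$ to a nearby point of $I$ lying between $S$ and $R$, then invoke the $K$-quasigeodesic inequality of Lemma~\ref{pq} together with the $3KC$ buffer at each end of $I$ to force distance at least $3C$ to any other maximal segment $L$, and finally subtract off the error. Your nearest-point-projection description of $\overline s\cap\overline r$ as $\pi^{-1}([S,R])$ is a slightly tidier way to produce that nearby point than the paper's informal ``let $Z_1\in I\cap U$ be at distance smaller than $C$ from $Z$''.
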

\begin{proof} 

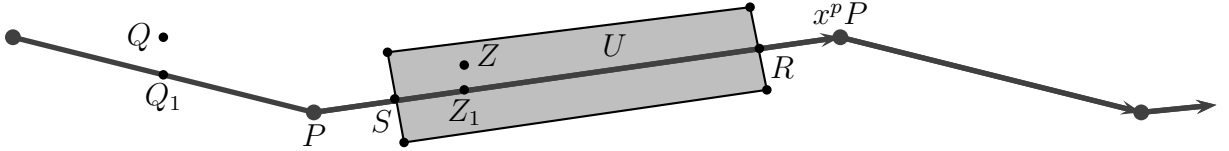
\begin{figure}[htbp]
\begin{pspicture}(0,-1)(16,1.6)


\pspolygon[fillcolor=lightgray,fillstyle=solid,showpoints=true](4.98,0.8)(5.2,-0.4)(10.023,0.3)(9.8,1.4)

\rput(8,0.9){$U$}
\rput{8}(4.5,0.0751){
}

\psline[linecolor=darkgray,linewidth=2pt,showpoints=true]{->}(0,1)(4,0)(11,1.)(15,0)(16,0.1)

\psline[linecolor=darkgray,linewidth=2pt]{->}(4,0)(11,1.)
\psline[linecolor=darkgray,linewidth=2pt]{->}(11,1.)(15,0)
\psline[linecolor=darkgray,linewidth=2pt]{->}(15,0)(16,0.1)

\rput(4,-0.3){$P$}
\rput(11,1.32){$x^pP$}

\rput(4.9,-0.12){$S$}\psdot(5.0762,0.18)
\rput(10.23,0.6){$R$}\psdot(9.924,0.85)

\psdot(6,0.3) \rput(6,-0.){$Z_1$}
\psdot(6,0.63) \rput(6.32,0.76){$Z$}

\psdot(2,1) \rput(1.68,1.){$Q$}
\psdot(2,0.5) \rput(2,0.2){$Q_1$}

 \end{pspicture}
 \caption{The quasi-geodesic}\label{m6}
\end{figure}
 Let $Q \in N_C(L)$, where  $L$ is a maximal segment of $\tg(x^p,y^q)$ different from $I$. Assume that  $Q$ and $P$ are in the same component of $N_C(\tg(x^p, y^q)) \setminus U$ (the proof is analogous in the other case). Let $Z$ be any point in $\mathrm{closure}(U)$ and let $Z_1$ be a point in $I \cap U$ at distance smaller than $C$ from $Z$.  Let $Q_1$ be a point in $L$ at distance smaller than $C$ of $Q$.
Then
$$
d(Q,Z) \ge d(Q_1,Z_1)-2C \ge d(P,S)/K-2C \ge C.
$$
Since $d(S,R) \ge p \cdot \tax-6KC$, one can take $J$ as the segment of $A_x$ from $S$ to $R$. This completes the proof of the lemma.

\end{proof}


We can (and will) assume without loss of generality that $K \ge 1$.

The following lemma is key to the paper.  
\begin{lemma}\label{noncanc first}  For each $L>0$ there exist a positive integer $p_0$ such that  for each pair of integers $p$ and $q$   satisfying  $ p \ge p_0$, and for each pair of hyperbolic transformations $x$ and $y$ (resp. $\xx$ and $\yy$)  whose axes  are distinct and intersect, and whose translation length is bounded by above by $L$, if $x^p y^q=\xx^p\yy^q$, then $\tg(x^p,y^q)=\tg(\xx^p,\yy^q)$.
\end{lemma}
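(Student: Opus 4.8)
The plan is to exploit the rigid geometric structure of the curves $\tg(x^p,y^q)$ established in Lemmas~\ref{pq}, \ref{pq1} and \ref{neigh}. Write $g = x^p y^q = \xx^p \yy^q$. Since $g$ is hyperbolic (its axis $A_g$ exists), both quasi-geodesics $\tg(x^p,y^q)$ and $\tg(\xx^p,\yy^q)$ are $K$-quasi-geodesics contained in $N_{C/2}(A_g)$, with $A_g \subset N_{C/2}$ of each, by Lemma~\ref{pq1}. Hence the two quasi-geodesics lie within $N_C$ of each other. The strategy is: first show that the long segments of $\tg(x^p,y^q)$ coming from translates of $A_x$ must be matched, segment-for-segment, with the long segments of $\tg(\xx^p,\yy^q)$; then conclude that the axes of $x$ and $\xx$ (after an appropriate power of $g$) coincide, that $\tax = \taxx{\xx}$ and hence $p\tax = p\taxx{\xx}$, so the turning points agree; and finally that $y^q$ and $\yy^q$ are also forced to agree, giving equality of the whole piecewise-geodesic path.

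Here are the steps in order. First I would fix $p_0$ so that $p_0 \tax \ge 6KC$ for all hyperbolic $x$ with $\tax \le L$ — possible since translation lengths in a discrete group are bounded below, so there are only finitely many relevant values, or more simply one uses the fixed lower bound on $\tax$. Then for $p \ge p_0$ each maximal $A_x$-segment $I$ of $\tg(x^p,y^q)$ has length $p\tax \ge 6KC$, and Lemma~\ref{neigh} produces for each such $I$ a subsegment $J \subset I$ of length at least $p\tax - 6KC > 0$ whose $C$-neighborhood meets no other maximal segment of $\tg(x^p,y^q)$. Now since $\tg(\xx^p,\yy^q) \subset N_C(\tg(x^p,y^q))$ and conversely, and $J$ is long (of length $\ge p\tax - 6KC$, which we can make as large as we like by enlarging $p_0$ — but we only need it positive and, crucially, longer than $2KC$), the portion of $\tg(\xx^p,\yy^q)$ running alongside $J$ must itself contain a long piece lying in a single maximal segment $I'$ of $\tg(\xx^p,\yy^q)$; a short segment of $\tg(\xx^p,\yy^q)$ cannot shadow a long $J$ because consecutive segments turn by the fixed angle $\theta$ bounded away from $0$ (Lemma~\ref{pq}), so two quasi-geodesic arcs staying $C$-close over a length exceeding a definite constant must be running along the \emph{same} pair of maximal geodesic segments. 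Applying Corollary~\ref{smallint}: $I$ and $I'$ are geodesic segments on axes of hyperbolic elements with translation length $\le L$, and $A_x \cap N_C(A_{x'})$ (where $I' \subset A_{x'}$, a translate of $A_\xx$) would be bounded of length $\le M$ unless $A_x = A_{x'}$. Choosing $p_0$ with $p_0 \tax - 6KC > M$ forces $A_x = A_{x'}$, i.e. the axes of the matched segments literally coincide.

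Once one pair of maximal segments is identified as lying on a common geodesic line, the same must hold for the $A_y$-segments adjacent to it (again by the shadowing argument plus Corollary~\ref{smallint}), and then by induction along the periodic structure every maximal segment of $\tg(x^p,y^q)$ lies on the same line as the corresponding maximal segment of $\tg(\xx^p,\yy^q)$. Comparing the point $P = A_x \cap A_y$ with the point $P' = A_\xx \cap A_\yy$: both are turning points of the two curves, which now share all their carrier lines, so after translating by a suitable power of $g$ (replacing $x$ by $g^k x g^{-k}$, which does not change $\tg$) we may assume the $A_x$-line equals the $A_\xx$-line and the $A_y$-line equals the $A_\yy$-line, whence $P = P'$. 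Equality of the relation $x^p y^q = \xx^p \yy^q$ together with $A_x = A_\xx$, $A_y = A_\yy$, and the orientation conventions then pins down $\tax = \taxx{\xx}$ and $\tay = \taxx{\yy}$ (the common axes carry the same forward direction, so $x$ and $\xx$ are positive powers of the same primitive translation along that line, and similarly for $y$, $\yy$); this makes the two piecewise-geodesic paths have identical vertices and identical edges, hence $\tg(x^p,y^q) = \tg(\xx^p,\yy^q)$.

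The main obstacle is the shadowing step: making precise that two $K$-quasi-geodesics that stay within $C$ of each other along a geodesic-like stretch longer than some explicit constant must be following the \emph{same} maximal geodesic segments — not merely nearby ones. This is where the geometric content beyond abstract quasi-geodesic stability is used: one needs the uniform lower bound $\delta$ on the turning angle from Lemma~\ref{pq} (so a short segment cannot impersonate a long one) together with the "no two distinct short-translation-length axes stay uniformly close for long" statement, Corollary~\ref{smallint}, to upgrade "close" to "equal". I expect the write-up to spend most of its effort setting up the correspondence of maximal segments carefully — choosing $p_0$ large relative to $K$, $C$, $M$, and the angle bound — after which the identification of axes, turning points, and translation lengths is essentially bookkeeping with the periodicity relation $\tg(x,y)(t+1) = xy\,\tg(x,y)(t)$.
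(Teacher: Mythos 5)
Your first half is essentially the paper's first half and is sound in outline: place both curves in $N_C$ of the common axis $A_g$ via Lemma~\ref{pq1}, use Lemma~\ref{neigh} to isolate a long subsegment $J$ of an $A_x$-segment $I$ whose $C$-neighborhood avoids all other maximal segments, choose $p_0$ large relative to $K$, $C$, $M$, and invoke Corollary~\ref{smallint} to force a maximal segment $I'$ of $\tg(\xx^p,\yy^q)$ onto the same geodesic line as $I$. (The paper is more careful than your angle-based appeal at one point: it shows a component of $U\cap\tg(\xx^p,\yy^q)$ consists of at most three segments and extracts one of length $>M$ by a triangle-inequality count, with the explicit choice $p_0=K(3M+10C)/\tau_0$.)

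The genuine gap is in your second half. First, you propagate the matching to the adjacent $A_y$-segments \emph{``again by the shadowing argument plus Corollary~\ref{smallint}''}; but the lemma assumes only $p\ge p_0$, while $q$ is arbitrary, so those segments have length $q\tay$, which may be far smaller than $M$, and Corollary~\ref{smallint} gives no information about two distinct axes staying $C$-close along a short overlap. This asymmetry (only one exponent large) is essential to the Main Theorem, so it cannot be repaired by also taking $q$ large. Second, even granting that the carrier lines of $I$ and $I'$ coincide, it does not follow that their endpoints (the turning points) coincide: the two segments could be offset along the common line, and the relation $x^py^q=\xx^p\yy^q$ together with orientation conventions does not obviously exclude this. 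The paper devotes the entire second half of its proof to exactly this configuration: assuming a vertex $v$ of $I$ is not a vertex of $I'$, it uses the congruent angles of Lemma~\ref{pq} and the $g$-periodicity of the curves to produce either a convex quadrilateral with angle sum $2\pi$ (impossible in hyperbolic geometry) or two arcs, each of length $(\tax+\tay)/2$, joining the common midpoint $w$ of $I$ and $I'$ to the intersection point $z$ of the adjacent segments $L$ and $L'$, contradicting the triangle inequality; only then does it conclude $v=v'$, $L=L'$, and, iterating, equality of the quasi-geodesics. The step you dismiss as ``essentially bookkeeping'' is where the real content of the lemma lies, and as written your argument does not close it.
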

\begin{proof}

We start by describing the two parts of the proof.
First, in the situation above, the two corresponding quasigeodesics are one in a $C$-neighborhood of the other. In particular, segments of one quasi-geodesic are in $C$-neighborhood of segments of the other quasi-geodesics.  By making the integer $p$ long enough, we obtain a "long" geodesic segment in a $C$-neighborhood of other geodesic segment. This implies that these two segment intersect in an interval. 

Second, we use the fact that the quasi-geodesics are constructed by translating two consecutive maximal segments by powers of $g$, to show if the two intersecting segments are distinct, an impossible figure is obtained.

Here are the details of the proof: For each finitely generated, discrete subgroup $G$ of $\mathrm{Isom}(\H)$, there exist a positive constant $\tau_0$ such that for each hyperbolic transformation $x \in G$, $\tau_x \ge \tau_0$ (see, for instance, \cite[Theorem 1.4.2]{Green})

Let $C$ and $K$ be as in Lemmas~\ref{pq} and \ref{pq1}. Let $M$ be the constant of Corollary \ref{smallint}. We will show that  $p_0= K(3M+10C)/ \tau_0$ gives the desired conclusion.

Since $x^p y^q=\xx^p\yy^q$, $A_{x^p y^q}=A_{\xx^p\yy^q}$.
  By Lemma \ref{pq1}, $$\tg(\xx^p,\yy^q) \subset N_{C/2}(A_{g}) \subset N_C(\tg(x^p,y^q)).$$ Let $U$ and $J$ be the neighborhood and  the segment given by   Lemma~\ref{neigh} respectively, so $J \subset U$, $J\subset I\subset \tg(x^p,y^q)$ and the length of $J$ is at least $p \tax - 6KC$. 

Observe that $\tg(\xx^p,\yy^p)$ must intersect $U$, otherwise $\tg(\xx^p,\yy^q)$ is included in  $N_C(\tg(x^p,y^q)\sm J)$, which has two components. Furthermore,   $\tg(\xx^p,\yy^p)$ must intersect both the components, contradicting the fact that $\tg(\xx^p,\yy^q)$ is connected. By Lemma \ref{neigh}, $N_C(L) \cap \mathrm{closure}(U) = \emptyset$ for all maximal segments  $L$ of $\tg(x^p,y^q)$ distinct from $I$. 
Hence, $\tg(\xx^p,\yy^q)$ does not intersect the intersection of the boundary of $U$ with the boundary of  $N_C(\tg(x^p,y^q))$. 

By hypothesis, the length of $J$ is at least $p\tax -  6KC$ so it is at least $3KM+4KC$

\begin{figure}[htbp]
\begin{center}
\begin{pspicture}(0,0.8)(12,3)%

\pspolygon[fillstyle=solid,fillcolor=lightgray](1,0.5)(10,0.3)(10,3)(0.3,3)

\psline(0.7,1.687642)(2,1.5)(5,2.5)(8,1.6)(10,01.8)

\rput(0.25,1.9){$2C$}
\rput(10.3,1.74){$2C$}
\rput(9.684,2.68){$U$}
\rput(6,1.5){$\tg(\xx^p,\yy^p)$}
 \end{pspicture}

\end{center}
\caption{The intersection of  neighborhood $U$ of $J$ with $\tg(\xx^p,\yy^q)$}\label{arc1}
\end{figure}
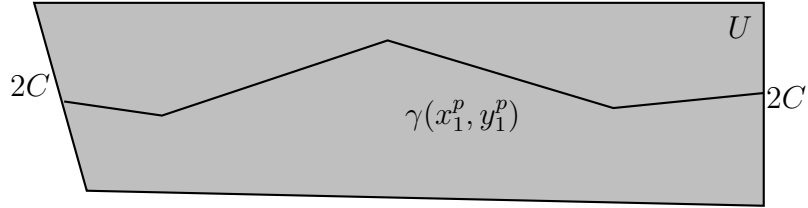

Thus, the components of the set $U \cap \tg(\xx^p,\yy^q)$ are piecewise linear curves starting and ending at the sides of $U$ of length $2C$ (see Figure~\ref{arc1}). Let $\beta$ be one of these components.  We claim that $\beta$ contains a segment $l$ of length greater than $M$. 
Indeed, if $\beta$ contains three or more vertices of $\tg(\xx^p,\yy^q)$ then one segment of $\beta$ is a maximal segment of  $\tg(\xx^p,\yy^q)$ included in  a translate of $\xx^p$. Therefore, it must have length at least $p_0 \tax$.  Otherwise, $\beta$ consists in at most three segments. Denote by $m$ the length of the longest of these segments. By the triangular inequality,
$$
K(3 M+4C) \le p \tax - 6KC \le 3m+4C.  
$$
Since $K >1$, $m>M$.  Thus the claim is proved.

The segment $l$ of $\beta$ of length at least $M$ is included in some segment $I'$ of $\tg(\xx^p,\yy^q)$. Thus  $I' \cap N_C(J)$ contains a segment longer than $M$.   By Lemma~\ref{smallint},  $I'$ intersects $I$ in a subsegment.  This concludes the first part of the proof. We will show that the assumption  $I \ne I'$ leads to a contradiction.

If $I\neq I'$ by interchanging the roles of $I$ and $I'$ if necessary, we can assume that there is a vertex $v$ of $I$ which is not in $I'$. Let $v'$ be the vertex of $I'$ closest to $v$. Denote by  $L$ (resp. $L'$) be the  maximal segment of   $\tg(x^p,y^q)$ (resp. $\tg(\xx^p,\yy^q)$)  so that $I$ and $L$  (resp. $I'$ and $L'$) are adjacent and intersect in $v$ (resp. $v'$).

Recall that $\tg(x^p,y^q)$  (resp. $\tg(\xx^p,\yy^q)$) is constructed by taking two consecutive maximal segments and translating them by powers of $g$. To simplify the notation, we write $g=x^py^q$. The segment adjacent to $L$ (resp. $L'$) different from $I$ (resp. $I'$) is $g(I)$  (resp. $g(I')$).
Denote by $u$ (resp. $u'$) the other vertex of $I$ (resp. $I'$) .
Note that $v$ and $g(u)$ (resp. $v'$ and  $g(u')$) are the vertices of $L$ (resp. $L'$).
 
 Suppose first that $u$ is in $I'$.
  By Lemma~\ref{pq}, the angles $u,v,g(u)$ and $v, g(u), g(v)$ are congruent. 
Hence there is a convex quadrilateral with vertices $v, v',  g(u),  g(u')$, Figure~\ref{qg2}. By Lemma~\ref{pq}, the sum of the interior angles of this quadrilateral is $2\pi$, a contradiction in hyperbolic geometry. This implies that $u$ is not in $I'$.

\begin{figure}[htbp]
\begin{center}
\begin{pspicture}(14,3)%

\rput(2,0.5){
\rput(6.2, 1.8){$v$}

\rput(0,2.32){$I'$}
\rput(5,2.32){$I$}
\rput(2.65,1.2){$L$}
\rput(4.975,1.3){$L'$}

\psdot[dotsize=0.25](9,0)
\rput(9, -0.4){$g(v)$}

\psdot[dotsize=0.25](7,0)
\rput(7, -0.4){$g(v')$}

\psdot[dotsize=0.25](6,2)

\rput(1, 1.7){$u$}
\psdot[dotsize=0.25](1,2)

\psline(1,2)(6,2)(4,0)(9,0)

\rput(3.368, 1.8){$v'$}
\psdot[dotsize=0.25](4,2)

\rput(-1, 1.7){$u'$}
\psdot[dotsize=0.25](-1,2)

\psline[linestyle=dashed,linewidth=0.1](-1,2)(4,2)(2,0)(7,0)

\psdot[dotsize=0.25](2,0)
\rput(2., -0.4){$g(u')$}

\psdot[dotsize=0.25](4,0)
\rput(4, -0.4){$g(u)$}

\rput(8, 0.4){$g(I)$}

}
 \end{pspicture}

\end{center}

\caption{Length of $I$ equals the length of $I'$}\label{qg2}
\end{figure}
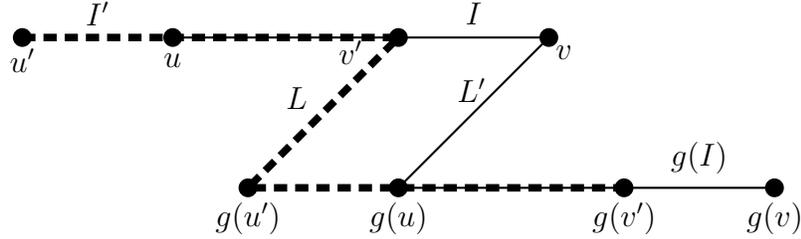 

Denote by $l$ the geodesic through $v$ and $g(u)$. 
By Lemma~\ref{pq}, the angles $u, v, g(u)$ and $v, g(u), g(v)$ are congruent.
This implies that $u$ and $g(v)$ are in different sides of $l$. On the other hand, $u$ and $v'$ (resp. $g(v)$ and $g(u')$) are on the same side of $l$. 
Then $vÕ$ and $g(uÕ)$ are on different sides of $l$. Hence $L$ intersects $L'$ and  
 the quasi-geodesics are arranged as in Figure~\ref{qg3}.

In particular, the segments $L$ and $L'$ intersect at a point $z$. The triangles with vertices $z, v', v$ and $z, g(u), g(u')$ have congruent corresponding angles. Hence, these two triangles  are congruent. Thus, $z$ is the middle point of $L$ and also, of $L'$.  Since the segments with vertices $u,u'$ and $g(u), g(u')$ are congruent, the segments with vertices $u, u'$ and $v, v'$ are congruent.

Denote by $w$ the middle point of $I$. Observe that $w$ is also the middle point of $I'$. The length of the arc of $\tg(\xx^p,\yy^q)$ from $w$ to $z$ equals $(\tax+\tay)/2$. 
Also, the length of the arc of $\tg(x^p,y^q)$ from $w$ to $z$ equals $(\tax+\tay)/2$. By the triangle inequality, this is impossible. Thus we conclude that $v=v'$.

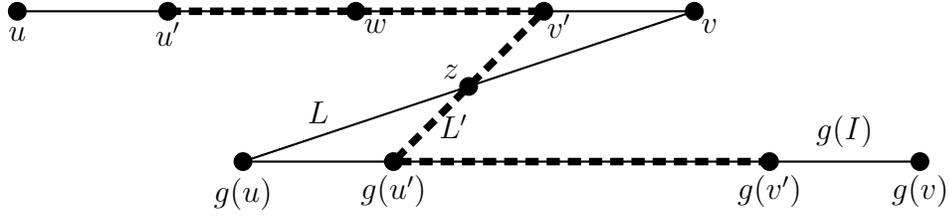
\begin{figure}[htbp]
\begin{center}
\begin{pspicture}(14,3)%

\rput(2, 0.5){

\rput(3,0.65){$L$}
\rput(4.8,0.465){$L'$}

\rput(3.765, 1.8){$w$}
\psdot[dotsize=0.25](3.5,2)

\rput(4.765, 1.2){$z$}
\psdot[dotsize=0.25](5,1)

\rput(6.2, 1.8){$v'$}
\psdot[dotsize=0.25](6,2)

\rput(1, 1.7){$u'$}
\psdot[dotsize=0.25](1,2)

\psline(-1,2)(8,2)(2,0)(11,0)

\rput(8.2, 1.8){$v$}
\psdot[dotsize=0.25](8,2)

\rput(-1, 1.7){$u$}
\psdot[dotsize=0.25](-1,2)

\psline[linestyle=dashed,linewidth=0.1](1,2)(6,2)(4,0)(9,0)
\psdot[dotsize=0.25](2,0)
\rput(2., -0.43){$g(u)$}

\psdot[dotsize=0.25](4,0)
\rput(4., -0.4){$g(u')$}

\psdot[dotsize=0.25](9,0)
\rput(9., -0.4){$g(v')$}

\psdot[dotsize=0.25](11,0)
\rput(11., -0.4){$g(v)$}

\rput(10, 0.4){$g(I)$}

}

 \end{pspicture}

\end{center}

\caption{ Length of $I$ is larger than length of $I'$}\label{qg3}
\end{figure}

Using the above arguments we  prove that  $L=L'$. Continuing this argument we see that  the quasi-geodesics $\tg(x^p,y^q)$ and $\tg(\xx^p,\yy^q)$ coincide.

\end{proof}

\begin{theorem}\label{noncanc}  For each $L>0$ there exist a positive integer $p_0$ such that  if $p$ and $q$ are integers   satisfying  $ p \ge p_0$, and  $x$ and $y$ (resp. $\xx$ and $\yy$) are hyperbolic transformations whose axes  are distinct and intersect,  whose translation length is bounded by above by $L$,  $p\tax \ne q \tay$ 
and $x^p y^q=\xx^p\yy^q$, then there exists  $g \in G$, such that  $\xx=x^g$ and $\yy=y^g$.
\end{theorem}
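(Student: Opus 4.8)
The plan is to upgrade the purely metric conclusion of Lemma~\ref{noncanc first} (the two quasigeodesics \emph{coincide as point sets}) to an \emph{algebraic} conclusion (the generating hyperbolic isometries are simultaneously conjugate). First I would invoke Lemma~\ref{noncanc first}: since $p \ge p_0$ and the translation lengths are bounded by $L$, we get $\tg(x^p,y^q)=\tg(\xx^p,\yy^q)$ as subsets of $\H$. The key structural fact established in the construction before Lemma~\ref{pq} is that $\tg(x^p,y^q)$ is an alternating piecewise-geodesic: it consists of maximal segments of length $p\tax$ lying on translates of $A_x$, alternating with maximal segments of length $q\tay$ lying on translates of $A_y$. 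The hypothesis $p\tax \ne q\tay$ is exactly what lets me distinguish the two \emph{types} of maximal segments purely from the geometry of the common point set: a maximal segment has length either $p\tax$ or $q\tay$, and these are different, so "long" segments versus "short" segments is a well-defined dichotomy intrinsic to the curve. (Here I'd note that consecutive maximal segments really do have different lengths — an $x$-segment is followed by a $y$-segment — so the pattern of lengths along the curve is $\ldots, p\tax, q\tay, p\tax, q\tay, \ldots$; in particular the curve "remembers" which segments came from $x$ and which from $y$, up to the global swap of the two roles which can't happen once $p\tax\ne q\tay$ unless we also swap, and then one checks the orders still force $p\leftrightarrow p$.)

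Next I would extract the axes. Pick a maximal segment of type "$x$" in the common curve, say the one from $P$ to $x^p(P)$, which lies on $A_x = A_{\xx}$'s... more precisely, it lies on $A_{x^p}=A_x$ and simultaneously must be a maximal segment of $\tg(\xx^p,\yy^q)$, hence lies on some $G$-translate $h A_{\xx}$ of $A_{\xx}$. A geodesic segment of positive length determines the geodesic line through it, so $A_x = h A_{\xx}$. Then I would compare the translation lengths of the generators realized along that segment: the segment of $\tg(x^p,y^q)$ has length $p\tax$ and the corresponding segment of $\tg(\xx^p,\yy^q)$ has length $p\taxx{\xx}$ (both are the "$x$-type" maximal segments, which we've pinned down using $p\tax\ne q\tay$ — if they matched the "$y$-type" we'd need $p\tax = q\taxx{\yy}$ forced by length, and then symmetrically the other segments force $q\tay = p\taxx{\xx}$, giving $\tax/\tay = \taxx{\xx}/\taxx{\yy}$ and $p\tax=q\tay$ after one more step, contradicting the hypothesis — I need to check this bookkeeping carefully). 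So $\tax = \taxx{\xx}$, and similarly $\tay = \taxx{\yy}$. Moreover, the orientations match because the curve is oriented and the positive direction along each axis is the direction of translation by the generator. Hence along this segment $x^p$ and $h\xx^p h^{-1}$ act identically: same axis, same translation length, same orientation, so $x^p = h\xx^p h^{-1}$ as isometries. Since $x$ (resp.\ $h\xx h^{-1}$) is the unique hyperbolic isometry with axis $A_x$, the given orientation, and translation length exactly $1/p$ of that of $x^p$ — wait, more carefully: two hyperbolic isometries with the same oriented axis commute, and $x^p = (h\xx h^{-1})^p$ together with both being "primitive enough" forces $x = h\xx h^{-1}$; but actually the cleanest route is: $x$ and $h\xx h^{-1}$ both translate along the oriented line $A_x$, their $p$-th powers agree, and the group of translations of a geodesic is torsion-free (it's $\subseteq\R$), so $x = h\xx h^{-1}$.

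Finally I would nail down that the \emph{same} $h$ conjugates $y$ to $\yy$. Having fixed the $x$-type segment $[P, x^p(P)]$ and the element $h$ with $hA_{\xx}$ equal to this segment's supporting line and $h\xx h^{-1}=x$, I look at the adjacent $y$-type segment of the common curve: on the $(x,y)$ side it is the segment of $A_y$ from, say, $x^p(P)$-ish... I need to be slightly careful with which endpoint, but the curve $\tg(x^p,y^q)$ near that vertex goes along $A_x$ then turns onto a translate of $A_y$; the corresponding turn of $\tg(\xx^p,\yy^q)$ goes along $hA_{\xx}$ then onto $h(\text{translate of }A_{\yy})$, and since the two curves agree, the $y$-type segment is simultaneously a subsegment of a $G$-translate of $A_y$ and of $h\cdot(G$-translate of $A_{\yy})$. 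Matching supporting line, orientation and translation length ($\tay=\taxx{\yy}$) as before gives $h\yy h^{-1}$ and $y$ sharing an oriented axis; running the torsion-free-translation argument on the $q$-th powers (the $y$-type maximal segment has length $q\tay$ on one side and $q\taxx{\yy}$ on the other, and these supporting translates coincide) yields $y = h\yy h^{-1}$ with the \emph{same} $h$, because $h$ was already determined by the constraint on the previous segment and the two segments share the vertex. Setting $g = h^{-1}$ gives $\xx = x^g$ and $\yy = y^g$, as claimed.

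The main obstacle I anticipate is the bookkeeping in the previous paragraph: ensuring that the "$x$-type / $y$-type" matching is forced the right way rather than the swapped way, and that a \emph{single} conjugating element $h$ works for both generators. The hypothesis $p\tax \ne q\tay$ is doing real work here — it rules out the degenerate matching — but I'll need to handle the a priori possibility that an $x$-segment of one curve matches a $y$-segment of the other; I expect this is killed by comparing lengths along \emph{two} consecutive maximal segments (whose lengths are $p\tax$ and $q\tay$ in one order) and using that the supported curves are genuinely alternating, together with the fact that both $p$ appears with $x$ and $q$ with $y$ on each side. A secondary, more routine point is the "same oriented axis plus agreeing powers implies equality" step, which reduces to the torsion-freeness of the translation subgroup $\mathrm{Transl}(A_x)\cong\R$ and the fact that a hyperbolic element and its own axis determine each other; I'd state this as a short lemma or inline remark rather than belabor it.
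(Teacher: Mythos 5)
Your overall route is the same as the paper's: apply Lemma~\ref{noncanc first} to get $\tg(x^p,y^q)=\tg(\xx^p,\yy^q)$, observe that the decomposition of this common curve into maximal geodesic segments is intrinsic (the corner angles are genuine), match the segments of the two parameterizations, rule out the ``swapped'' matching, and then upgrade the resulting equality of $p$-th and $q$-th powers to conjugacy of $x,\xx$ and $y,\yy$ by a single element. The single-conjugator point and the step from $\xx^p=(x^p)^{h^n}$ to $\xx=x^{h^n}$ (same oriented axis, equal powers, translations along a line form a torsion-free group) are fine, and the paper glosses them in the same way.

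The genuine gap is exactly the step you flagged. In the swapped matching the length comparison gives only $p\tax = q\taxx{\yy}$ and $q\tay = p\taxx{\xx}$; these are two equations in four independent quantities and do not yield $p\tax=q\tay$, so your ``one more step'' does not exist. Worse, the swap cannot be excluded from the stated hypotheses by any bookkeeping: take $p=q\ge p_0$, primitive elements $z,w$ with distinct intersecting axes and $\tau_z\ne\tau_w$, and set $x=z$, $y=w$, $h=z^pw^p$, $\xx=w^{h}$, $\yy=z$. Then all hypotheses hold (including $p\tax\ne q\tay$) and $x^py^q=\xx^p\yy^q=h$, yet $\xx$ is conjugate to $y$, not to $x$; the two quasi-geodesics coincide with the $\xx$-segments sitting on the $y$-segments. (The paper's own proof has the same soft spot: it asserts that its case (2) implies $p\tax=q\tay$ without justification.) What rescues the argument where the theorem is actually used (the Main Theorem) is that there $\xx$ is a conjugate of $x$ and $\yy$ a conjugate of $y$, so $\taxx{\xx}=\tax$ and $\taxx{\yy}=\tay$, and then the swapped matching really does force $p\tax=q\tay$, contradicting the hypothesis. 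So to make your write-up airtight you should either add the hypothesis $\taxx{\xx}=\tax$, $\taxx{\yy}=\tay$ (equivalently, that $\xx,\yy$ are conjugate to $x,y$), or weaken the conclusion to the alternative ``either $\xx,\yy$ are simultaneously conjugate to $x,y$, or $\xx^p$ is conjugate to $y^q$ and $\yy^q$ to $x^p$''; with either fix, the rest of your argument goes through as written.
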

\begin{proof} Since $x^p y^q=\xx^p\yy^q$, $A_{x^p y^q}=A_{\xx^p\yy^q}$. Moreover, both axes are oriented in the same direction.

 If $p_0$ is the positive integer given by  Lemma \ref{noncanc first}, then $\tg(x^p,y^q)=\tg(\xx^p,\yy^q)$. This implies that there exist $u$ and $v$ in $G$ such that that   one of the following holds 
  
 \begin{numlist}
 \item $\xx^p=(x^p)^u=(x^p)^{h^n}$ and $\yy^q=(y^q)^v=(y^q)^{h^n}$.
 \item $\xx^p=(x^p)^u=(y^q)^{h^{n+1}}$ and $\yy^q=(y^q)^v=(x^p)^{h^n}$.
 \end{numlist}

Since (2) implies that $p \tax = q \tay$, the result follows by taking $g=h^n$.
 \end{proof}

\section{Proof of the main theorem}\label{main}

An element $z$ in $\Z[\cc]$ can be uniquely represented as a sum $\sum_{i=1}^k n_i\la x_i\ra$ so that the conjugacy classes $\la x_i\ra$ are all distinct and the integers $n_i$ are non-zero. We define the \emph{Manhattan norm of $z$} by
$$M\left( \sum_{i=1}^k n_i\la x_i\ra\right)=\sum_{i=1}^k \left\vert n_i\right\vert.$$

We are now in a position to prove our Main Theorem. Denote by $X_p$ and $Y_q$ the cyclic groups generated by $x^p$ and $y^q$ respectively. Note that by definition 
$$[\la x^p\ra,\la y^q \ra]=\sum_{_{X_pbY_q \in I(x^p,y^q)}} \inu(x^p,(y^p)^b)\conjj{ x^p(y^p)^b }.$$

Our first step is to collate terms in this expression. There is a natural quotient map from $X_p\backslash G/Y_q$ to $X\backslash G/Y$, mapping $X_p\backslash g /Y_q$  to $X\backslash g /Y$.  Observe that $\inu(x^p,(y^p)^b)=\inu(x,y^b)$. Further observe that if $X_p\backslash g /Y_q$  and $X_p\backslash g'/Y_q$ map to the same element in $XgY$, then $\la x^p(y^p)^{g} \ra=\la x^p (y^p)^{g'}\ra=\la x^p(y^{g'})^q\ra$. 
The lemma below follows by grouping terms corresponding to their images in $I(x,y)$.

\begin{lemma}
We have
$$[\la x^p\ra,\la y^q \ra]=pq\left(\sum_{XbY\in I(x,y)} \inu(x,y^b)\conjj{ x^p(y^b)^q}\right) .$$
\end{lemma}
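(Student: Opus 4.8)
The plan is to make precise the statement that the left-hand side decomposes as $pq$ copies of each intersection point in $I(x,y)$, with consistent sign and with the expected free homotopy class. First I would set up the natural quotient map $\pi\colon X_p\backslash G/Y_q \to X\backslash G/Y$, $X_p g Y_q \mapsto X g Y$, which is well defined since $X_p\subset X$ and $Y_q\subset Y$. I claim this map restricts to a map $I(x^p,y^q)\to I(x,y)$: if $A_{x^p}\cap h A_{y^q}\neq\emptyset$ for some $h\in X_pgY_q$, then since $A_{x^p}=A_x$ and $A_{y^q}=A_y$ the same $h$ (which also lies in $XgY$) witnesses $XgY\in I(x,y)$; conversely any double coset in $I(x,y)$ is hit. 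The core combinatorial content is to count the fibers: I would argue that the fiber of $\pi$ over a fixed $XbY\in I(x,y)$ has exactly $pq$ elements. Concretely, $X/X_p$ has $p$ elements represented by $1,x,\dots,x^{p-1}$ and $Y/Y_q$ has $q$ elements represented by $1,y,\dots,y^{q-1}$, and the double cosets $X_p(x^i b y^j)Y_q$ for $0\le i<p$, $0\le j<q$ are pairwise distinct and exhaust the fiber — distinctness uses that $x$ (hence $x^i$) is not a proper power issue but rather that the axes $A_x, bA_y$ are distinct and cross transversally, so $x^i b y^j \in X_p b' Y_q$ forces $i,j$ pinned down; this is where I would have to be a little careful (the clean way is to note $b$ has trivial stabilizer issues because $A_x\neq bA_y$, so the $X$-action on the coset $XbY/\!\sim$ etc. is free).

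Next I would check that $\pi$ is compatible with the data attached to each term. For $h\in X_p g Y_q$, write $h = x^i b y^j$ with $b$ a representative of $\pi(X_pgY_q)$; then $(y^q)^h$ has axis $hA_{y^q}=hA_y = x^i b A_y$ (using $y^j$ fixes $A_y$), so the sign $\inu(x^p,(y^q)^h)$ of the crossing of $A_{x^p}=A_x$ with $hA_y$ equals $\inu(x,y^{x^i b})$, and since translating along $A_x$ by $x^i$ does not change the crossing sign, this equals $\inu(x,y^b)$. Thus all $pq$ terms in a fiber carry the same sign $\inu(x,y^b)$. For the free homotopy class: the term is $\la x^p (y^q)^h\ra = \la x^p\, h y^q h^{-1}\ra$; I would show $\la x^p (y^q)^h\ra = \la x^p (y^b)^q\ra$ for every $h$ in the fiber. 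Writing $h = x^i b y^j$, we have $(y^q)^h = x^i b y^j y^q y^{-j} b^{-1} x^{-i} = x^i (y^b)^q x^{-i}$, so $x^p (y^q)^h = x^p x^i (y^b)^q x^{-i} = x^i\bigl(x^p (y^b)^q\bigr)x^{-i}$ since $x^p$ commutes with $x^i$; hence it is conjugate to $x^p(y^b)^q$, as desired.

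Assembling: starting from the displayed formula $[\la x^p\ra,\la y^q\ra]=\sum_{X_pbY_q\in I(x^p,y^q)}\inu(x^p,(y^q)^b)\la x^p(y^q)^b\ra$ (here using $\inu(x^p,(y^q)^b)=\inu(x,y^b)$ as already noted, with the minor typo $y^p$ vs $y^q$ in the excerpt corrected), I group the sum according to the fibers of $\pi$. Each fiber over $XbY\in I(x,y)$ contributes $pq$ identical terms, each equal to $\inu(x,y^b)\la x^p(y^b)^q\ra$ by the two paragraphs above, giving
$$[\la x^p\ra,\la y^q\ra]=\sum_{XbY\in I(x,y)} pq\cdot \inu(x,y^b)\la x^p(y^b)^q\ra = pq\left(\sum_{XbY\in I(x,y)}\inu(x,y^b)\la x^p(y^b)^q\ra\right),$$
which is the claim. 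The main obstacle I anticipate is the fiber-cardinality count — specifically, verifying that the $pq$ representatives $x^i b y^j$ land in genuinely distinct double cosets of $X_p\backslash G/Y_q$, which requires ruling out accidental coincidences $x^i b y^j = x^p{}^{\!m} b' x^{?}\cdots$; the key point making this work is that $A_x \ne bA_y$ so $b$ conjugates $Y$ to a subgroup intersecting $X$ trivially, whence the double coset space fibers cleanly, but stating and using this cleanly is the one genuinely non-formal step.
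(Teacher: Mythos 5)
Your proposal follows essentially the same route as the paper: its (terse) proof is exactly this grouping argument --- the quotient map $X_p\backslash G/Y_q\to X\backslash G/Y$, the equality of signs and of conjugacy classes $\la x^p(y^q)^h\ra=\la x^p(y^b)^q\ra$ along each fiber, and collation of terms --- with your $pq$ fiber-count supplying the one detail the paper leaves implicit. The only caveat, harmless because such terms carry sign zero on both sides, is that $I(x,y)$ as defined does not exclude the degenerate case $A_x=bA_y$, where the fiber may have fewer than $pq$ elements.
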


We are now ready to prove our main result.

\begin{proclama}{Main Theorem} Let $G$ be a finitely generated, discrete group of $\mathrm{Isom}(\H)$ and let $L>0$. There exists $p_0$ such that if $p$ and $q$ are integers at least one  of which is larger than $p_0$ then the following holds:
\begin{numlist}
\item If $x$ and $y$ are non-conjugate hyperbolic transformations in $G$, with translation length bounded by above by $L$ such that $p\tau(x)\neq q\tau(y)$ then $\frac{M[x^p,y^q]}{p\cdot q}$ equals the geometric intersection number of $x$ and $y$.
\item If $p \ne q$, and  $x$ is a  hyperbolic transformation in $G$,  not a proper power, and has translation length bounded by above by $L$ then  $\frac{M[x^p,x^q]}{2 \cdot p \cdot q}$ equals the geometric self-intersection number of $x$.
\end{numlist}
\end{proclama}
\begin{proof}
Interchanging $x$ and $y$ if necessary, we can assume that $p\geq p_0$.

Suppose that $\la x^p(y^b)^q\ra = \la x^p(y^{b'})^q\ra$. Then  for some $h \in G$,
$$x^p(y^b)^q=(x^p (y^{b'})^q)^h=(x^p)^h (y^q)^{hb'}.$$
By Theorem~\ref{noncanc}, there is an element $g$ that conjugates $x$ to $x^g$ and $y^b$ to $y^{gb'}$. In particular, the signs $\inu(x,y^b)$ and $\inu(x^g, y^{gb'})$ coincide, so there is no cancellation. This concludes the proof.
\end{proof}

\begin{figure}
\begin{center}

\psset{xunit=0.3cm,yunit=0.3cm,algebraic=true,dotstyle=o,dotsize=3pt 0,linewidth=0.8pt,arrowsize=3pt 2,arrowinset=0.25}
\begin{pspicture*}(-1.51,-1.69)(20.75,13.43)
\psaxes[labelFontSize=\scriptstyle,xAxis=true,yAxis=true,Dx=2,Dy=2,ticksize=-2pt 0,subticks=2]{->}(0,0)(-1.51,-1.69)(20.75,13.43)
\psplot[linestyle=dashed,dash=2pt 2pt]{0}{20.75}{(-0--2*x)/3}
\begin{scriptsize}
\psdots[dotsize=5pt 0](3,2)
\psdots[dotsize=5pt 0](3,3)
\psdots[dotsize=5pt 0](6,4)
\psdots[dotsize=5pt 0](9,6)
\psdots[dotsize=5pt 0](12,8)
\psdots[dotsize=5pt 0](15,10)
\psdots[dotsize=5pt 0](1,2)
\psdots[dotsize=5pt 0](1,3)
\psdots[dotsize=5pt 0](2,2)
\psdots[dotsize=5pt 0](2,1)
\psdots[dotsize=5pt 0](1,1)
\psdots[dotsize=5pt 0](3,1)
\psdots[dotsize=5pt 0](2,3)
\psdots[dotsize=5pt 0](18,12)
\psdots[dotsize=5pt 0](4,1)
\psdots[dotsize=5pt 0](4,2)
\psdots[dotsize=5pt 0](4,3)
\psdots[dotsize=5pt 0](4,4)
\psdots[dotsize=5pt 0](3,4)
\psdots[dotsize=5pt 0](2,4)
\psdots[dotsize=5pt 0](1,4)
\end{scriptsize}
\end{pspicture*}

\end{center} 
\caption{Values of $p$ and $q$ in the Main Theorem, $p_0=5$, $p/q\ne 3/2$, $p$ in $x$-axis}
\end{figure}
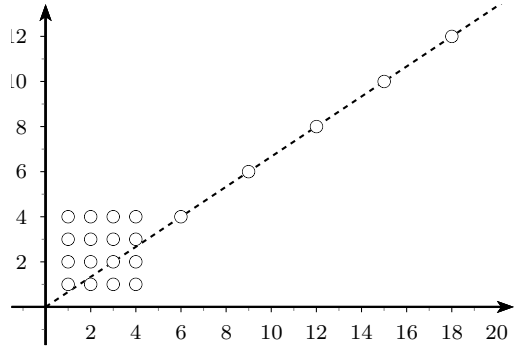


\end{document}